\journal{J. Comput. Phys.}
        \theoremstyle{plain}
        \newtheorem{thm}{Theorem}[section]
        \newtheorem{prop}[thm]{Proposition}
        \newdefinition{rem}{Remark}
\newcommand{\Mat}[1]{#1}
\renewcommand{\vec}[1]{\boldsymbol{#1}}
\renewcommand{\Vec}[1]{\mathbf{#1}}
\newcommand{\blkVec}[1]{\mathbf{#1}}
\newcommand{\tensorOne}[1]{\boldsymbol#1}
\newcommand{\tensorTwo}[1]{\boldsymbol#1}
\newcommand{\funSpace}[1]{\mathcal{#1}}
\newcommand{\vecFunSpace}[1]{\boldsymbol{\mathcal{#1}}}
\newcommand{\tensorFour}[1]{\textbf{\sffamily #1}}
\newcommand{\tS}{\widetilde{S}}
\newcommand{\tH}{\widetilde{H}}
\newcommand{\tT}{\widetilde{T}}
\begin{document}

\begin{frontmatter}

\title{A scalable preconditioning framework for stabilized contact mechanics with
       hydraulically active fractures}

\author[UNIPD]{Andrea Franceschini\corref{mycorrespondingauthor}}
\ead{andrea.franceschini@unipd.it}

\author[UNIPD]{Laura Gazzola}
\ead{laura.gazzola.1@phd.unipd.it}

\author[UNIPD]{Massimiliano Ferronato}
\ead{massimiliano.ferronato@unipd.it}

\address[UNIPD]{Department of Civil, Environmental and Architectural Engineering,
  University of Padova, Padova, Italy}
\cortext[mycorrespondingauthor]{Corresponding author}

\begin{abstract}
A preconditioning framework for the coupled problem of frictional contact
mechanics and fluid flow in the fracture network is presented. The porous medium is
discretized using low-order continuous finite elements, with cell-centered Lagrange
multipliers and pressure unknowns used to impose the constraints and solve the fluid flow in the fractures, respectively. This formulation does not require any interpolation between
different fields, but is not uniformly inf-sup stable and requires a stabilization. For
the resulting $3 \times 3$ block Jacobian matrix, we design scalable preconditioning
strategies, based on the physically-informed block partitioning of the unknowns and
state-of-the-art multigrid preconditioners. The key idea is to restrict the system to a single-physics problem, approximately solve it by an inner algebraic multigrid approach, and finally prolong it back to the fully-coupled problem. Two different techniques are presented, analyzed and compared by changing the ordering of the
restrictions. Numerical results
illustrate the algorithmic scalability, the impact of the relative number of
fracture-based unknowns, and the performance on a real-world problem.

\end{abstract}

\begin{keyword}
Scalable preconditioners \sep
Contact mechanics \sep
Darcy fracture flow
\MSC[2010] 65F08 \sep 65N22 \sep 65N30 \sep 65N55
\end{keyword}

\end{frontmatter}


\allowdisplaybreaks

\section{Introduction}
\label{sec:intro}

In recent years, attention has grown around novel technologies and applications in the
subsurface, like geothermal energy production \cite{pan2019establishment,
wei2019numerical, asai2019efficient}, hydraulic fracturing \cite{williams2019discursive,
tan2019politics, krzaczek2020simulations}, CO$_2$ sequestration \cite{fan2019thermo,
li2019coupled, liu2019tutorial} and underground gas storage \cite{zhou2019seismological,
karev2019geomechanical, firme2019salt}. In these contexts, one of the key components is
the simultaneous simulation of frictional contact mechanics and fluid flow in faults and fractures, which represent tightly coupled
physical processes. 
In fact, the aperture and slippage between the contact surfaces drive the fluid flow in the fractures, while the pressure variation perturbs the stress state in the surrounding medium and influences the contact mechanics itself. 
To achieve the
desired accuracy, large domains are usually required, with high resolution representations
of geological structures and their heterogeneous properties \cite{fergamjantea10,castelletto2013geological}, and, specifically, of faults and
fracture networks \cite{zoback2010reservoir, goodman1968model, ferronato2008numerical,
GarKarTch16,Set_etal17, shakiba2015using, ren2016fully, wong2019investigation,
wu2019integrating, deb2009extended, zhang2011extended, mohammadi2012xfem,
flemisch2016review, Berrone2017768, vahab2017numerical, khoei2018enriched, Berrone2019C317, Berrone2021B381}. It is, therefore, natural to have a growing demand towards the development of sophisticated models of increasing size, which are computationally intensive and require better and better performances. A key factor in this sense is the linear solver, which is usually by far the most
time-consuming component in a real-world simulation \cite{koric2016sparse,
franceschini2019robust}.

In this work, we analyze the simulation of frictional contact
mechanics coupled with the fluid flow in a fracture network and present a scalable and efficient
preconditioning framework for the linear system arising from the discretization and linearization of
the coupled problem. As to the discretization approach, we elect to use the Discrete Fracture Model
(DFM) \cite{GarKarTch16}, i.e., an explicit representation of the fracture surfaces, while
the constraints are imposed with the aid of Lagrange multipliers \cite{hild2010stabilized,
JhaJua14, FraFerJanTea16, berge2020finite, koppel2019stabilized}. As it is common in geological and
reservoir simulations, we rely on low-order finite elements for the mechanics and
a cell-centered finite volume scheme for the fluid flow. Lagrange multipliers are the contact forces acting on
the fracture surfaces as a cell-centered variable, thus sharing the same representation as the fluid
pressure field with no interpolation needed. The details of this
discretization scheme are described in \cite{fr2020alg}. This approach is unstable in the 
Ladyzhenskaya-Babu\v{s}ka-Brezzi
(LBB) sense, i.e., it does not uniformly satisfy the
\textit{inf-sup} condition \cite[Section 3.1]{wohlmuth2011variationally}, and
requires a stabilization. 
In this work, we use the
global algebraic approach introduced in the reference work \cite{fr2020alg}. 
The Jacobian matrix arising
from the described problem is non-symmetric with a $3 \times 3$ block structure, which has to be properly preconditioned to allow for a robust, scalable and efficient solution with the aid of Krylov subspace solvers.

It is well known that iterative methods based on projections/orthogonalizations onto Krylov subspaces \cite{saad2003iterative} are in practice mandatory to solve large and sparse linear systems deriving from the discretization of
PDEs, because they allow for a lower complexity, smaller memory requirement, and better degree of
algorithmic parallelism than  direct methods \cite{davis2006direct}.
However, robustness, scalability and computational efficiency of this class of methods is tightly connected with the choice of a proper preconditioning technique \cite{saad2003iterative}. Roughly speaking, preconditioners are approximate applications of the system matrix inverse, and, from the algebraic viewpoint, can be
classified into three main categories: (i) incomplete factorizations \cite{saad1994ilut,
lin1999incomplete, benzi2002preconditioning}, (ii) approximate inverses
\cite{benzi1996sparse, tang1999toward, huckle2003factorized,janfergam10,janfer11, janna2015fsaipack}, and (iii)
multilevel methods, i.e., domain decomposition \cite{janfergam13,dolean2015introduction,
zampini2016pcbddc, badia2016multilevel, li2017low} and multigrid-like techniques
\cite{mccormick1982multigrid, stuben1983algebraic, brandt1986algebraic, stuben2001review,
notay2012aggregation, brezina2005adaptive, vanvek1996algebraic, brezina2006adaptive,
brandt2011bootstrap, brandt2014bootstrap,
Pasetto20171159, dambra2018bootcmatch, dambra2019improving,
paludetto2019novel}. A key feature for a modern preconditioning framework is the
algorithmic scalability, i.e., the ability to solve an increasingly refined problem with an approximately
constant number of iterations of the Krylov solver. This property is particularly important in view of the development of problems of increasing size by exploiting the availability of massively parallel computational platforms. Incomplete factorizations and approximate inverses can exhibit amazing performances, but do
not have a linear complexity with the system size. By distinction, multilevel methods can have a lower performance on a single system, but are designed to be optimal with respect to the scalability issue. Algebraic multigrid (AMG, \cite{xu2017algebraic}) is one of the most effective multilevel
approaches and consists of the complementary use of: (i) a smoother that reduces high
frequency errors, (ii) a coarse grid correction that reduces low frequency errors, and (iii)
restriction and interpolation operators, to move from one grid to another. Starting from the original
works, e.g., \cite{ruge1987algebraic}, a wide range of multigrid approaches has appeared in the literature,
extending the applicability of this method, originally designed for elliptic PDEs, to both
non-symmetric \cite{manteuffel2018nonsymmetric, manteuffel2019nonsymmetric} and block matrices \cite{webster2016stabilisation, brenner2014multigrid, chen2015multigrid,
brenner2018multigrid, wiesner2021algebraic, brenner2020multigrid}. 
Nonetheless, robustness and efficiency is still an open issue for AMG whenever used 
as a black-box tool in problems with these algebraic properties.
The Jacobian matrix arising from the model considered herein is a
non-symmetric $3 \times 3$ block matrix and, despite the available studies for
similar problems, none of them can be straightforwardly and effectively
applied to our case. In the context of geomechanical simulations, only a few studies
on $2 \times 2$ block Jacobian systems \cite{aagaard2013domain, franceschini2019block,
wiesner2021algebraic} are found by the authors.

The purpose of this work is to design a scalable preconditioning framework for the $3 \times 3$
block matrix arising from the coupled simulation of frictional contact mechanics and fluid flow in the fracture network. The idea is to exploit the inherent physics-based block subdivision and the scalability of AMG techniques available from the literature. 
The full system is first restricted to a single-physics problem, then approximately solved by AMG, and finally prolonged back to the original size.
According to the selected restriction ordering, different approaches can be derived. In this work, we consider two different options and investigate advantages and drawbacks in order to
find the most appropriate algorithm for real-world simulations. 
%
The paper is organized as follows. Section \ref{sec:gen_fram} introduces the
physical problem in both the strong and weak forms, in order to understand the meaning and
features of each block of the Jacobian system. In Section \ref{sec:preconditioner},
the preconditioning framework is presented, with a detailed analysis of two selected options. Finally, Section \ref{sec:numres} presents a set of numerical results with the aim of
comparing the proposed approaches 
and investigating the algorithmic scalability 
in both theoretical and real-world benchmarks.
A few concluding remarks close the paper.

\section{Problem statement}
\label{sec:gen_fram}

We model the deformation of an open elastic domain $\Omega \subset \mathbb{R}^3$, assuming
quasi-static conditions and infinitesimal strains within the open time interval $\mathcal{T}=\left(0,t_{\max}\right]$. We denote by $\partial \Omega$ its boundary, with $\overline{\Omega}=\Omega\cup\partial\Omega$, 
and $\vec{n}_{\Omega}$ the outer normal vector to $\partial \Omega$, while a set of internal boundaries $\Gamma
= \cup_{i=1}^{n_f} \Gamma_i$ represents a fracture network consisting of $n_f$ surfaces. The
external boundary is subdivided into two
non-overlapping subsets, $\partial\Omega_u$ and $\partial\Omega_{\sigma}$, where Dirichlet and Neumann boundary conditions
apply, respectively. Each fracture $\Gamma_i$ consists of two overlapping surfaces,
$\Gamma_i^-$ and $\Gamma_i^+$, with the orientation defined by a unitary vector $\vec{n}_i$
orthogonal to the fracture plane. By convention, we choose $\vec{n}_i =
\vec{n}_i^- = -\vec{n}_i^+$. The pressure field is
defined on the union $\Gamma$ of the two-dimensional (2D) domains $\Gamma_i$, 
%
with $\partial \Gamma_i$ a one-dimensional (1D) curve defining the boundary of each fracture and $\overline{\Gamma}_i=\Gamma_i\cup\partial\Gamma_i$. The curve $\partial\Gamma_i$
is subdivided into two non-overlapping subsets, $\partial \Gamma_{i,p}$ and $\partial
\Gamma_{i,q}$, where Dirichlet and Neumann boundary conditions for the pressure
field are imposed. The vector $\vec{m}_i$ denotes the outer normal direction to
$\partial \Gamma_i$. The fluid is assumed to be incompressible, and body forces and
buoyancy effects are neglected.
The projection of the stress tensor $\tensorTwo{\sigma}$ along $\vec{n}_i$, 
$\vec{t} = \tensorTwo{\sigma} \cdot \vec{n}_i^- = -
\tensorTwo{\sigma} \cdot \vec{n}_i^+ = (t_N \vec{n}_i + \vec{t}_T)$, is the traction vector
over $\Gamma_i$, with $t_N$ and $\vec{t}_T$ 
its
normal and tangential component, respectively, with respect to the fracture-local reference frame.
The traction on $\Gamma_i$ controls the possible slipping and aperture of the fracture according to the Coulomb frictional law. A schematic representation of the considered conceptual framework is shown in Figure \ref{fig:conc_scheme}.

\begin{figure}
    \centering
    \null\hfill
    \subfloat[]{\includegraphics[width=0.45\linewidth]{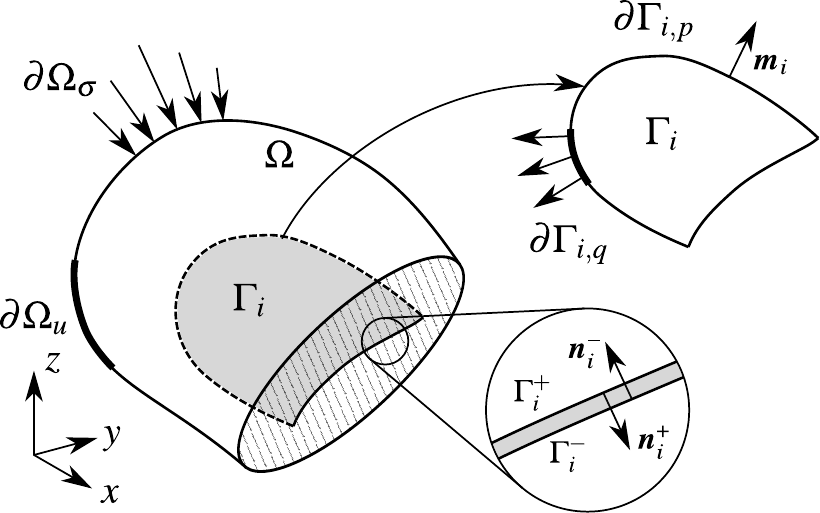}
      \label{fig:conc_scheme}}\hfill
    \subfloat[]{\includegraphics[width=0.45\linewidth]{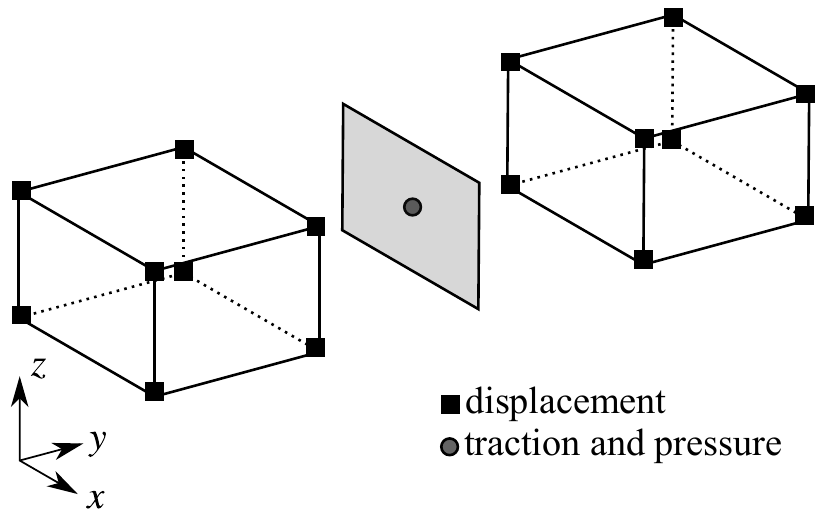}
      \label{fig:elem_scheme}}
    \hfill\null
    \caption{(a) Conceptual scheme of the elastic domain and embedded fracture network. (b) Example of low-order discretization.}
    \label{fig:scheme_elements}
\end{figure}

The strong form of the initial boundary value problem (IBVP) can be stated as follows
\cite{KikOde88,Lau03,Wri06,fr2020alg}: 
given the fluid discharge $q_s:\cup_{i=1}^{n_f}\Gamma_i\times\mathcal{T}\rightarrow\mathbb{R}$, the prescribed boundary displacement $\overline{\vec{u}}:\partial\Omega_u\times\mathcal{T}\rightarrow\mathbb{R}^3$ and traction $\overline{\vec{t}}:\partial\Omega_\sigma\times\mathcal{T}\rightarrow\mathbb{R}^3$, the prescribed fracture boundary pressure $\overline{p}:\cup_{i=1}^{n_f}\partial\Gamma_{i,p}\times\mathcal{T}\rightarrow\mathbb{R}$ and flux $\overline{q}:\cup_{i=1}^{n_f}\partial\Gamma_{i,q}\times\mathcal{T}\rightarrow\mathbb{R}$, the initial displacement $\vec{u}_0:\overline{\Omega}\rightarrow\mathbb{R}^3$ and pressure $p_0:\cup_{i=1}^{n_f}\overline{\Gamma}_i\rightarrow\mathbb{R}$,
find the displacement $\vec{u}:\overline{\Omega}\times[0,t_{\max}]\rightarrow\mathbb{R}^3$, the traction $\vec{t}:\cup_{i=1}^{n_f} \Gamma_i\times[0,t_{\max}]\rightarrow\mathbb{R}^3$, and pressure $p:\cup_{i=1}^{n_f} \overline{\Gamma}_i\times[0,t_{\max}]\rightarrow\mathbb{R}$ such that:

\begin{subequations}
\begin{align}
  - \nabla \cdot \tensorTwo{\sigma} (\vec{u}) &= \vec{0} & \mbox{ in }& \Omega \times \mathcal{T}
    &&& &\mbox{(linear momentum balance)}, \label{eq:momentumBalanceS}\\
  \dot{g}_N(\vec{u}) + \nabla \cdot \vec{q} (\tensorOne{u},p) &= q_s & \mbox{ in }&
    \cup_{i=1}^{n_f}\Gamma_i \times \mathcal{T} &&& &\mbox{(mass balance)}, \label{eq:massBalanceS}\\
  \vec{t} - p \vec{n}_i &= 0 & \mbox{ on }& \cup_{i=1}^{n_f}\Gamma_i \times \mathcal{T}
    &&& &\mbox{(traction balance)}, \label{eq:momentumBalanceS_F}\\
  \intertext{respecting the boundary conditions}
  \vec{u} &= \bar{\vec{u}} & \mbox{ on }& \partial \Omega_{u} \times \mathcal{T} &&& &\mbox{(prescribed
    boundary displacement)}, \label{eq:momentumBalanceS_DIR}\\
  \tensorTwo{\sigma}(\tensorOne{u}) \cdot \vec{n}_\Omega &= \bar{\vec{t}} & \mbox{ on }&
    \partial \Omega_{\sigma} \times \mathcal{T} &&& &\mbox{(prescribed boundary traction)},
    \label{eq:momentumBalanceS_NEU}\\
  p &= \bar{p} & \mbox{ on }& \cup_{i=1}^{n_f}\partial \Gamma_{i,p} \times \mathcal{T} &&& &\mbox{(prescribed boundary
    pressure)}, \label{eq:massBalanceS_DIR}\\
  \vec{q}(\tensorOne{u},p) \cdot \vec{m}_i &= \bar{q} & \mbox{ on }& \cup_{i=1}^{n_f}\partial \Gamma_{i,q} \times \mathcal{T} &&&
    &\mbox{(prescribed boundary flux)}, \label{eq:massBalanceS_NEU}\\
  \intertext{and initial conditions}
  \vec{u}|_{t=0} &= \vec{u}_0 & \mbox{ in }& \overline{\Omega} &&& &\mbox{(initial
    displacement)}, \label{eq:ICu}\\
  p|_{t=0} &= p_0 & \mbox{ in }& \cup_{i=1}^{n_f}\overline{\Gamma}_i &&& &\mbox{(initial pressure)},
   \label{eq:ICp}\\
  \intertext{subject to the constraints over each $\Gamma_i$ and for every time in $\mathcal{T}$}
  t_N = \vec{t} \cdot \vec{n}_i &\le 0, & g_N = \llbracket \vec{u} \rrbracket \cdot
    \vec{n}_i \ge 0, \quad t_N g_N &= 0 &&& &\mbox{(normal contact conditions)},
    \label{eq:normal_contact_KKT} \\
  \left\| \vec{t}_T \right\|_2 - \tau_{\max}(t_N) & \le 0, & \dot{\vec{g}}_T \cdot
    \vec{t}_T - \tau_{\max}(t_N) \left\| \dot{\vec{g}}_T \right\|_2 &= 0 &&& &
    \mbox{(Coulomb frictional law)}. \label{eq:frictional_contact_KKT}
\end{align}
\label{eq:IBVP}\null
\end{subequations}
In the problem statement, $\tensorTwo{\sigma}(\vec{u}) = \tensorFour{C}: \nabla^s \vec{u}$
is the Cauchy stress tensor, with $\tensorFour{C}$ the fourth-order elasticity tensor;
$\vec{q} (\vec{u},p) = -(C_f(\vec{u})/\mu) \nabla p$ is
the fluid volumetric flux in the fracture domain according to Darcy's law \cite{witherspoon1980validity}---assuming laminar flow---with $\nabla p$ the fluid pressure gradient,
$\mu$ the fluid viscosity (constant), and $C_f$ the isotropic fracture hydraulic
conductivity modeled as in \cite{GarKarTch16}:
\begin{equation}
  C_f = C_{f,0} + \frac{g_N^3}{12},
  \label{eq:cond_def}
\end{equation}
with $C_{f,0}$ the conductivity related to two irregular surfaces that are in contact
\cite{kamenov2013laboratory}; 
$\llbracket \vec{u} \rrbracket = ( \left.\vec{u}\right|_{ \Gamma_i^+ } - \left.\vec{u}
\right|_{ \Gamma_i^- } ) = (g_N \vec{n}_i + \vec{g}_T)$ denotes the relative displacement
across $\Gamma_i$, where $g_N$ and $\vec{g}_T$ are the normal and tangential components,
respectively, and $\left.\vec{u}\right|_{ \Gamma_i^+ }$ and $\left.\vec{u}\right|_{
\Gamma_i^- }$ are the restrictions of $\vec{u}$ on $\Gamma_i^+$ and $\Gamma_i^-$;
$\tau_{\text{max}} = c - t_N \tan(\theta)$ is the limit value provided by the static
Coulomb criterion, with $c$ and $\theta$ the cohesion and friction angle, respectively.
Since we employ a static Coulomb criterion, the tangential velocity
$\dot{\tensorOne{g}}_T$ in \eqref{eq:frictional_contact_KKT} is replaced with the tangential
displacement increment $\Delta{\tensorOne{g}}_T$ \cite{wohlmuth2011variationally} with
respect to the previously converged time-step. 

In our framework, we assume $\Gamma$ to be fixed with no propagation. The domain $\Gamma$ is
partitioned into three portions, where the following contact conditions occur:
\begin{itemize}
  \item \textit{stick} on $\Gamma^{\text{stick}}$: the fracture is closed
    ($\llbracket \vec{u} \rrbracket = \vec{0}$) and the traction vector $\vec{t}$ is unknown;
  \item \textit{slip} on $\Gamma^{\text{slip}}$: the fracture is closed in the
    normal direction ($g_N=0$ and $t_N$ is unknown), but a slip displacement $\vec{g}_T$ between
    $\Gamma_i^+$ and $\Gamma_i^-$ is allowed for, with $\vec{t}_T\vert_{\Gamma^{\text{slip}}} =
    \vec{t}_T^\ast = \tau_{\text{max}}(t_N) \frac{\vec{\Delta g}_T}{||\vec{\Delta g}_T||_2}$;
  \item \textit{open} on $\Gamma^{\text{open}}$: the fracture is fully open and a
    free relative displacement $\llbracket \vec{u} \rrbracket$ is allowed for, with $\vec{t}=\vec{0}$.
\end{itemize}
For additional details regarding the governing formulation, we refer the reader to
\cite{KikOde88,Lau03,Wri06,fr2020alg}.

\subsection{Discrete weak form}
In the solution to the model problem \eqref{eq:IBVP}, the traction $\vec{t}$ used as a primary variable plays the role of Lagrange multipliers.
Denoting with
$(\cdot,\cdot)_D$ the appropriate $L^2$-inner
product of scalar, vector or tensor functions in the spatial domain $D$, we introduce the finite-dimensional subspaces $\vecFunSpace{V}^h$,
$\vecFunSpace{M}^h(t^h_{N})$ and $\funSpace{P}^h$:
\begin{subequations}
  \begin{align}
      \vecFunSpace{V}^h \subset \vecFunSpace{V} &= \{ \tensorOne{\eta}\in[H^1(\Omega)]^3 : \tensorOne{\eta}=\overline{\vec{u}} \mbox{ on } \partial\Omega_u \}, \label{eq:Vh} \\
      \vecFunSpace{M}^h(t^h_{N}) \subset \vecFunSpace{M}(t_{N}) &= \left\{ \tensorOne{\mu} \in [L^2(\Gamma)]^3 : \mu_N \le 0, \left( \tensorOne{\mu}, \tensorOne{v} \right)_{\Gamma} \le \left( \tau_{\max}( t_N ), ||\tensorOne{v}_T||_2 \right)_{\Gamma}, \tensorOne{v} \in [H^{1/2}(\Gamma)]^3] \text{ with } v_N \ge 0 \right\}, \label{eq:Mh} \\
      \funSpace{P}^h \subset \funSpace{P} &= \{ \chi \in L^2(\Gamma) \}, 
      \label{eq:Ph}
  \end{align}
\end{subequations}
and the discrete approximations $\{\vec{u}^h, \vec{t}^h, p^h\}$ of $\{\vec{u}, \vec{t}, p\}$:
\begin{equation}
    \vec{u}^h = \sum_{i=1}^{n_u} \tensorOne{\eta_i} u_i \in \vecFunSpace{V}^h, \qquad \vec{t}^h = \sum_{j=1}^{n_t} \tensorOne{\mu}_j t_j \in \vecFunSpace{M}^h(t^h_N), \qquad p^h = \sum_{k=1}^{n_p} \chi_k p_k \in \funSpace{P}^h,
    \label{eq:discrete_approx}
\end{equation}
where, as before, the pedices $N$ and $T$ denote the components of a vector function along the normal and tangential direction with respect to a fracture-local reference frame on every $\Gamma_i$.
In \eqref{eq:discrete_approx}, $n_u$, $n_t$, and $n_p$ denote the number of discrete displacement, traction and pressure unknowns.
The weak form of
\eqref{eq:IBVP} reads \cite{fr2020alg}: find $\{\vec{u}^h, \vec{t}^h, p^h\} \in \vecFunSpace{V}^h \times
\vecFunSpace{M}^h(t^h_{N}) \times \funSpace{P}^h$ such that
\begin{subequations}
\begin{align}
  \mathcal{R}_{u} &= ( \nabla^s \tensorOne{ \eta }, \tensorTwo{\sigma} )_{\Omega}
    + \sum_{i=1}^{n_f} ( \llbracket \tensorOne{ \eta } \rrbracket, \tensorOne{ t }^h -
      p^h \tensorOne{n}_i )_{\Gamma_i}
    - ( \tensorOne{\eta}, \bar{\vec{t}} )_{\partial \Omega_{\sigma}}
    = 0 & & \forall \tensorOne{\eta} \in \vecFunSpace{V}^h_0, & \label{eq:weak_form_mom_h} \\
  \mathcal{R}_{t,i} &= ( t_N^h - \mu_N, g_N )_{\Gamma_i}
    + (\vec{t}_T^h - \tensorTwo{\mu}_T, \Delta {\vec{g}}_{T} )_{\Gamma_i} \ge 0 & & \forall \tensorOne{\mu} \in \vecFunSpace{M}^h (t^h_N), & i=1,\ldots,n_f,
    \label{eq:weak_lam_h}\\
  \mathcal{R}_{p,i} &= \left( \chi, \frac{\Delta g_{N} }{\Delta t} \right)_{\Gamma_i}
    + [ \chi, p^h ]_{\mathcal{F}_i} - F_{\mathcal{F}_i}(\chi) + G_{\mathcal{F}_i}(\chi)
    - ( \chi, q_s )_{\Gamma_i} = 0 & & \forall \chi \in \funSpace{P}^h, & i=1,\ldots,n_f, \label{eq:weak_form_mass_h}
\end{align}
\label{eq:weak_form_discr}\null
\end{subequations}
where $\vecFunSpace{V}^h_0$ is $\vecFunSpace{V}^h$ with homogeneous conditions along $\partial\Omega_u$, $\Delta t$ is the time step size, and $[\chi, p^h]_{\mathcal{F}}$ is a weighted inner product representing the classical
two-point flux approximation (TPFA) scheme. This is introduced to allow a unified
presentation of the coupled finite element/finite volume model \cite{EymGalHer00,
EymGalHer07,Age_etal10}. In particular, we have:
\begin{align}
  [\chi, p^h]_{\mathcal{F}} = 
    \sum_{\mathcal{E}_{\text{int}}} (\chi\vert_{\varphi_L} - \chi\vert_{\varphi_K} )
    \Upsilon_{KL} ( p^h\vert_{ \varphi_L } - p^h\vert_{ \varphi_K } ) +
    \sum_{\mathcal{E}_p} \chi\vert_{ \varphi_K } \Upsilon_K p^h\vert_{ \varphi_K },
    \label{eq:FV_inner_product}
\end{align}
where $\mathcal{E}_{\text{int}}$ and $\mathcal{E}_p$ represent the set of edges included
in $\Gamma$ and $\partial\Gamma_{p}$, respectively; $\varphi_K$ and $\varphi_L$ are the
two adjacent cells $K$ and $L$; and $\Upsilon_{KL}$ is the harmonic average of one-sided
transmissibility $\Upsilon_K$ and $\Upsilon_L$ associated to $\varphi_K$ and $\varphi_L$.
Finally, $F_{\mathcal{F}}$ and $G_{\mathcal{F}}$ collect the boundary conditions to be
prescribed on $\partial\Gamma_{p}$ and $\partial\Gamma_{q}$, respectively. For further
details, we refer the reader to \cite{fr2020alg}.

To solve the problem \eqref{eq:weak_form_discr}, we
transform the variational inequality \eqref{eq:weak_lam_h} into a variational equality. For this
purpose, we apply an active-set algorithm, as described in \cite{nocedal2006numerical,
antil2018frontiers, fr2020alg}, which allows to identify
the subdivision into stick/slip/open regions for every $\Gamma_i$. At a given step of the active-set algorithm, the stick/slip/open regions of each fracture $\Gamma_i$ are fixed and the inequality \eqref{eq:weak_lam_h} becomes:
\begin{equation}
  \mathcal{R}_{t,i}
  = \left( \tensorOne{\mu}, \tensorOne{g} \right)_{\Gamma_{i}^{\text{stick}} }
  + \left( \mu_N, g_N\right)_{\Gamma_{i}^{\text{slip}}}
  + \frac{1}{k} \left( \tensorTwo{\mu}_T, \tensorOne{t}_{T}^h - \tensorOne{t}_T^*
    \right)_{\Gamma_{i}^{\text{slip}}}
  + \frac{1}{k} \left( \tensorTwo{\mu}, \tensorTwo{t}^h
    \right)_{\Gamma_{i}^{\text{open}}}
  = 0, \qquad i=1,\ldots,n_f,
  \label{eq:weak_lam_partitioned}
\end{equation}
with $k$ a coefficient needed to ensure the dimensional consistency of the equation. 
Introducing in \eqref{eq:weak_form_mom_h}, \eqref{eq:weak_lam_partitioned} and \eqref{eq:weak_form_mass_h} the finite-dimensional bases of $\vecFunSpace{V}^h_0$,
$\vecFunSpace{M}^h( t_N^h )$ and $\funSpace{P}^h$ yields the
following system of nonlinear discrete residual equations:
\begin{equation}
  \blkVec{r} (\Vec{u}^{\ell}, \Vec{t}^{\ell}, \Vec{p}^{\ell})
  = \begin{bmatrix}
      \blkVec{r}_u (\Vec{u}^{\ell}, \Vec{t}^{\ell}, \Vec{p}^{\ell})\\
      \blkVec{r}_t (\Vec{u}^{\ell}, \Vec{t}^{\ell}, \Vec{p}^{\ell})\\
      \blkVec{r}_p (\Vec{u}^{\ell}, \Vec{t}^{\ell}, \Vec{p}^{\ell})
    \end{bmatrix}
  = \blkVec{0},
  \label{eq:discrete_res}
\end{equation}
which is solved by a Newton-Krylov method. In \eqref{eq:discrete_res}, the algebraic vectors
$\Vec{u}^{\ell}\in\mathbb{R}^{n_u}$, $\Vec{t}^{\ell}\in\mathbb{R}^{n_t}$ and $\Vec{p}^{\ell}\in\mathbb{R}^{n_p}$ collect the coefficients $u_i$,
$t_j$ and $p_k$ of the discrete displacement, traction and pressure fields in \eqref{eq:discrete_approx}
and $\ell$ is the active-set counter. After convergence of the Newton-Krylov method at the $\ell$-th step of the active-set algorithm, a consistency check is carried out in order to verify whether the assumed stick/slip/open region subdivision meets the Coulomb frictional conditions. If not, the region subdivision is updated and a new step is performed. The algorithm stops when the consistency check does not require to modify the stick/slip/open region subdivision. At this point, convergence is achieved and the solution is sought at the following time step. 

The finite element/finite volume spaces used in this work are the same as in
\cite{fr2020alg}, i.e., first-order continuous finite elements for displacements and
face-centered piecewise-constant elements for tractions and pressures, as schematically represented in Figure \ref{fig:elem_scheme}. 
To model the fractures, we use a DFM approach
with a conforming mesh~\cite{GarKarTch16}, hence any $\Gamma_i$ is
represented by a set of finite element faces.
Thus,
displacement unknowns are located on mesh vertices, while traction and pressure unknowns are
on fracture faces (Figure \ref{fig:elem_scheme}), with $n_u$ equal to three times the number of 3D finite element nodes, $n_p$ equal to the number of 2D faces discretizing the fracture network, and $n_t$ equal to $3\cdot n_p$. Displacements are represented in the global reference system and
tractions are represented in a face-based local reference frame. 
This approach is intrinsically unstable, as
it does not fulfill the inf-sup condition \cite{wohlmuth2011variationally}. In this work,
we use the global algebraic stabilization proposed in \cite{fr2020alg}, which relaxes the
zero jump and the impenetrability conditions between the two fracture surfaces in the traction balance equation, and the fluid incompressibility constraint in the mass balance equation.
Only stick and slip portions are involved in the traction balance, being the tractions in the open part known.
With the introduction of the stabilization, equations
\eqref{eq:weak_lam_partitioned} and \eqref{eq:weak_form_mass_h} become:
\begin{subequations}
\begin{align}
  \mathcal{R}_{t,i}
  &= \left( \tensorOne{\mu}, \tensorOne{g} \right)_{\Gamma_{i}^{\text{stick}} }
  + \left( \mu_N, g_N\right)_{\Gamma_{i}^{\text{slip}}}
  + \frac{1}{k}\left( \tensorTwo{\mu}_T, \tensorOne{t}_{T}^h - \tensorOne{t}_T^*
    \right)_{\Gamma_{i}^{\text{slip}}}
  + \frac{1}{k}\left( \tensorTwo{\mu}, \tensorTwo{t}^h
    \right)_{\Gamma_{i}^{\text{open}}}
  - j_t \left( \tensorTwo{\mu}, \tensorTwo{t}^h \right)_{\Gamma_i^{\text{stick}}
    \cup\Gamma_i^{\text{slip}}}
  = 0, & &i=1,\ldots,n_f,
  \label{eq:weak_lam_partitioned_stab} \\
  \mathcal{R}_{p,i} &= \left( \chi, \frac{\Delta g_{N} }{\Delta t} \right)_{\Gamma_i}
    + [ \chi, p^h ]_{\mathcal{F}_i} - F_{\mathcal{F}_i}(\chi) + G_{\mathcal{F}_i}(\chi)
    - ( \chi, q_s )_{\Gamma_i} + \frac{1}{\Delta t} j_p \left( \chi, p^h \right)_{\Gamma_i} = 0, & & i=1,\ldots,n_f, \label{eq:weak_mass_partitioned_stab}
  \end{align}
  \label{eq:weak_stab}\null
\end{subequations}
where $j_t ( \tensorTwo{\mu}, \tensorTwo{t}^h )$ and $j_p ( \chi, p^h )$ are the stabilizing bilinear forms
for the traction and pressure field, respectively.
In particular, we have
\begin{equation}
  j_t \left( \tensorTwo{\mu}, \tensorTwo{t}^h \right)
  = \sum_{\epsilon \in \mathcal{E}_{\text{int}}}
    \frac{1}{|\epsilon|}
    \int_{\epsilon}
    \llbracket \tensorTwo{\mu} \rrbracket_{\epsilon}
    \cdot
    \tensorTwo{\Upsilon}^{(\epsilon)}
    \cdot
    \llbracket \tensorTwo{t}^h \rrbracket_{\epsilon}
    \; \mathrm{d}l,
    \label{eq:jump_bilinear_form}
\end{equation}
with $\llbracket \cdot \rrbracket_{\epsilon}$ denoting the jump of a quantity across
the generic internal edge $\epsilon$ and $\tensorTwo{\Upsilon}^{(\epsilon)}$ is a
positive definite second-order tensor providing the appropriate scaling. The discrete
formulation of $\tensorTwo{\Upsilon}^{(\epsilon)}$ is fully provided in
\cite{fr2020alg}. 
The contribution $j_p ( {\chi}, {p}^h ) $ is computed as the normal projection of $j_t$ with respect to the surface $\Gamma_i$.

At a given active-set iteration $\ell$, the Newton linearization of \eqref{eq:discrete_res}, which
now includes also the stabilization terms, generates a sequence of linear
systems and vector updates. To advance by one Newton iteration $k$, we have to:
  \begin{equation}
    \begin{aligned}
      &\text{solve} \; \mathcal{J}^{\ell,(k)} \delta \mathbf{x} = -\mathbf{r}^{\ell,(k)} 
      && \Rightarrow \qquad
      \begin{bmatrix}
        \Mat{A} & \Mat{C}_1 & \Mat{Q}_1 \\
        \Mat{C}_2 & -\Mat{H} & \Mat{0} \\
        \Mat{Q}_2 & \Mat{0} & \Mat{T} \\
      \end{bmatrix}^{\ell,(k)}
      \begin{bmatrix}
        \delta \Vec{u} \\
        \delta \Vec{t} \\
        \delta \Vec{p}
      \end{bmatrix}
      =
      -\begin{bmatrix}
        \Vec{r}_u \\
        \Vec{r}_t \\
        \Vec{r}_p
      \end{bmatrix}^{\ell,(k)}, \\
      &\text{update} \; \mathbf{x}^{\ell,(k+1)} = \mathbf{x}^{\ell,(k)} + \delta \mathbf{x}
      && \Rightarrow \qquad
      \begin{bmatrix}
        \Vec{u} \\
        \Vec{t} \\
        \Vec{p}
      \end{bmatrix}^{\ell,(k+1)}
      =
      \begin{bmatrix}
        \Vec{u} \\
        \Vec{t} \\
        \Vec{p}
      \end{bmatrix}^{\ell,(k)}
      +
      \begin{bmatrix}
        \delta \Vec{u} \\
        \delta \Vec{t} \\
        \delta \Vec{p}
      \end{bmatrix}.
    \end{aligned}
    \label{eq:JacSys}
  \end{equation}
The submatrices in the $3 \times 3$ block Jacobian read:
  \small
  \begin{subequations}
    \begin{align}
      [\Mat{A}]_{ij} &
      = \left(\nabla^s \tensorOne{\eta}_i,\tensorFour{C}:\nabla^s
        \tensorOne{\eta}_j\right)_{\Omega}, && i=1,n_u, && j=1,n_u,
      \\
      [\Mat{C}_1]_{ij} &
      = \left( \llbracket \tensorOne{ \eta }_i \rrbracket, \tensorOne{\mu}_j
        \right)_{\Gamma}, && i=1,n_u, && j=1,n_t,
      \\
      [\Mat{Q}_1]_{ij} &
      = -\left( \llbracket \eta_{i,N} \rrbracket, \chi_j \right)_{\Gamma}, && i=1,n_u, && j=1,n_p,
      \label{eq:Q1def} \\
      [\Mat{C}_2]_{ij} &
      = \left( \tensorOne{\mu}_i, \llbracket \tensorOne{\eta}_j \rrbracket
        \right)_{\Gamma^{\ell,\text{stick}}}
      + \left( {\mu}_{i,N}, \llbracket {\eta}_{j,N} \rrbracket
        \right)_{\Gamma^{\ell,\text{slip}}}
      - \frac{1}{k}\left( \tensorOne{\mu}_{i,T}, \left( \frac{\partial \tensorOne{t}_T^*}
        {\partial \Delta \tensorOne{g}_T} \right) \biggr|^{\ell,(k)} \cdot \llbracket
        \tensorOne{\eta}_{j,T} \rrbracket \right)_{\Gamma^{\ell,\text{slip}}}, && i=1,n_t, && j=1,n_u,
      \label{eq:C2def} \\
      [\Mat{H}]_{ij} &
      = -\frac{1}{k}\left( \tensorOne{\mu}_{i,T}, \tensorOne{\mu}_{j,T}
        \right)_{\Gamma^{\ell,\text{slip}}}
      + \frac{1}{k}\left( \tensorOne{\mu}_{i,T}, \left( \frac{\partial \tensorOne{t}_T^*}
        {\partial t_N} \right) \biggr|^{\ell,(k)} \mu_{j,N} \right)_{\Gamma^
        {\ell,\text{slip}}}
      - \frac{1}{k}\left( \tensorOne{\mu}_{i}, \tensorOne{\mu}_{j}
        \right)_{\Gamma^{\ell,\text{open}}}
      + j_t\left( \tensorOne{\mu}_{i}, \tensorOne{\mu}_{j} \right)_{\Gamma^{\ell,\text{stick}}\cup\Gamma^{\ell,\text{slip}}}, && i=1,n_t, && j=1,n_t,
      \label{eq:Hdef} \\
      [\Mat{Q}_2]_{ij} &
      = \frac{1}{\Delta t} \left( \chi_i, \llbracket \eta_{j,N} \rrbracket
        \right)_{\Gamma^{\ell,\text{open}} }
      + \left.\frac{\partial \left( [ \chi_i, p^h ]_{\mathcal{F}} \right) }{\partial u_j }
        \right|^{\ell,(k)}
      - \left.\frac{\partial \left( F_{\mathcal{F}}( \chi_i ) \right) }{\partial u_j }
        \right|^{\ell,(k)}, && i=1,n_p, && j=1,n_u,
      \label{eq:Q2def} \\
      [\Mat{T}]_{ij} &
      = \left.\frac{\partial \left( [ \chi_i, p^h ]_{\mathcal{F}} \right) }{\partial p_j }
        \right|^{\ell,(k)}
      + \frac{1}{\Delta t} j_p\left( {\chi}_{i}, {\chi}_{j} \right)_{\Gamma^{\ell}}, && i=1,n_p, && j=1,n_p.
    \end{align}
    \label{eq:block_def}\null
  \end{subequations}
%
The partial derivatives appearing in \eqref{eq:block_def} are reported in
\cite[Appendix A]{fr2020alg}.

\subsection{Linear system}
We focus our attention on the linear system solution and
the design of robust, scalable and efficient preconditioners for the $3 \times 3$
block matrix of equation \eqref{eq:JacSys}. 
The global matrix $\mathcal{J}$ is large, sparse, and non-symmetric, with properties that change with the evolution of the stick/slip/open regions in the fracture network. A representative evolution of the non-zero pattern of $\mathcal{J}$ during a full simulation is shown in Figure \ref{fig:patterns}.
The features that follow are worth summarizing.
\begin{enumerate}
  \item The first block row of $\mathcal{J}$ includes the contributions arising from the linear momentum balance of the 3D domain $\Omega$. All the submatrices do not depend on the fracture state and can be assembled once at the beginning of the whole simulation if an elastic constitutive law is used. In particular, $\Mat{A}$ is the classical symmetric positive definite (SPD) elastic stiffness matrix, while 
    $\Mat{C}_1$ and $\Mat{Q}_1$ are tall rectangular blocks collecting a surface measure of the fracture elements and transferring tractions and pressures to the 3D body as applied forces. 
  \item In the second block row of $\mathcal{J}$, $\Mat{C}_2$ varies as the stick/slip/open fracture regions evolve through the active-set algorithm, in both the entry values and the non-zero pattern (Figure \ref{fig:patterns}).
  If all the fracture elements are in stick mode, we have that $\Mat{C}_2 = \Mat{C}_1^T$, otherwise
  the frictional law derivatives appear and $\Mat{C}_2\neq\Mat{C}_1^T$.
  \item When all fractures belong to the stick region, $\Mat{H}$ is the symmetric positive semidefinite (SPSD) stabilization matrix. In case of sliding,
non-symmetric diagonal $2 \times 2$ blocks arise, one for each traction component along the local tangential direction to the fracture surface. In the open regions the rows of $H$ have a single non-zero entry in the main diagonal, with no contribution from the stabilization term (Figure \ref{fig:patterns}). In any case, $H$ is singular and cannot be regularly inverted.
  \item The third block row of $\mathcal{J}$ includes the contributions arising from the fluid mass balance on the fracture network. The coupling between fluid flow and fracture mechanics is controlled by
  $\Mat{Q}_2$. In particular, when all fracture elements are in stick mode, $\Mat{Q}_2 = \Mat{0}$
  and $\mathcal{J}$ is reducible with a $2\times2$ symmetric saddle-point matrix as leading block. Otherwise, 
   contributions from the flux derivative with respect to
the displacements appear, i.e., $\Mat{Q}_2$ entries depend on the current pressure solution (Figure \ref{fig:patterns}). By distinction with $\Mat{C}_1^T$ and $\Mat{C}_2$, there is no simple relationship between $\Mat{Q}_1^T$ and $\Mat{Q}_2$, in both the entry values and the non-zero pattern. Denoting with $\Psi$ the matrix-to-matrix
operator returning a zero row if the corresponding element index belongs to
$\Gamma^{\text{stick}} \cup \Gamma^{\text{slip}}$ and the original row if the element index belongs to $\Gamma^{\text{open}}$,
$\Mat{Q}_2$ can be written as:
\begin{equation}
  \Mat{Q}_2 = -\frac{\Psi\left( \Mat{Q}_1^T \right)}{\Delta t} + \Mat{F}_u,
  \label{eq:Q2vsQ1}
\end{equation}
where $\Mat{F}_u$ collects the contributions from the flux derivatives with respect to the displacements.
  \item $\Mat{T}$ is the sum of the standard transmissibility matrix arising from the TPFA discretization in the 2D domain $\Gamma$ and the
    stabilization contribution. As such, it is SPD with the 5-point stencil of a 2D discrete Laplacian. Moreover, $\Mat{T}$ has a block diagonal structure for all non-intersecting fractures. Observe also that traction and pressure fields are always decoupled.
\end{enumerate}
From the observations above, it appears that matrix $\mathcal{J}$ changes nature with the evolution of the fracture conditions, moving from a reducible matrix with a symmetric saddle-point leading block to a general non-symmetric and indefinite matrix. The objective of our work is to define a unique preconditioning framework ensuring robustness, scalability and computational efficiency for any working situation.

\begin{figure}
  \centering
  \null\hfill
  \subfloat[Pure stick mode]{\includegraphics[width=0.33\linewidth]{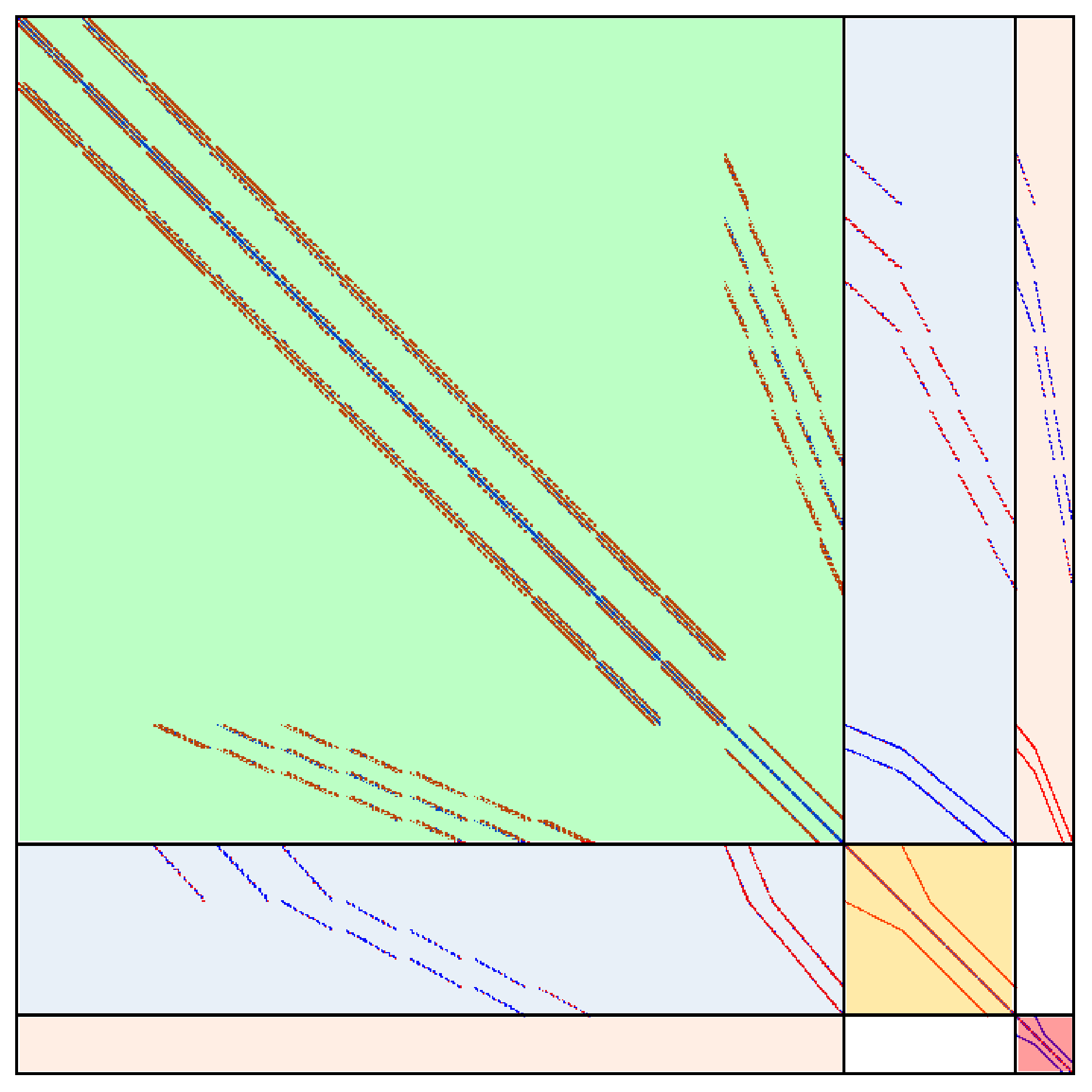}}\hfill
  \subfloat[Stick/slip/open modes]{\includegraphics[width=0.33\linewidth]{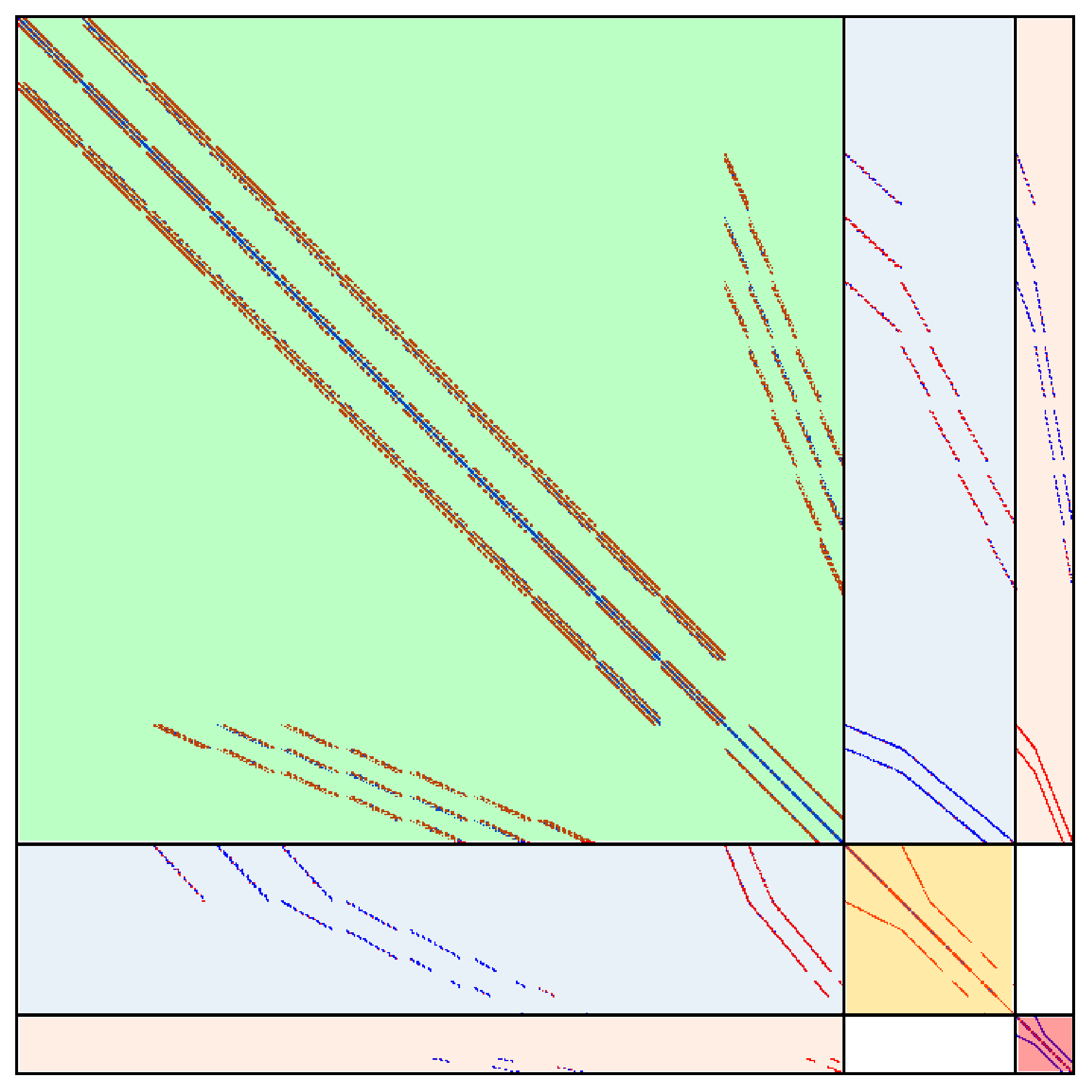}}\hfill
  \subfloat[Pure open mode]{\includegraphics[width=0.33\linewidth]{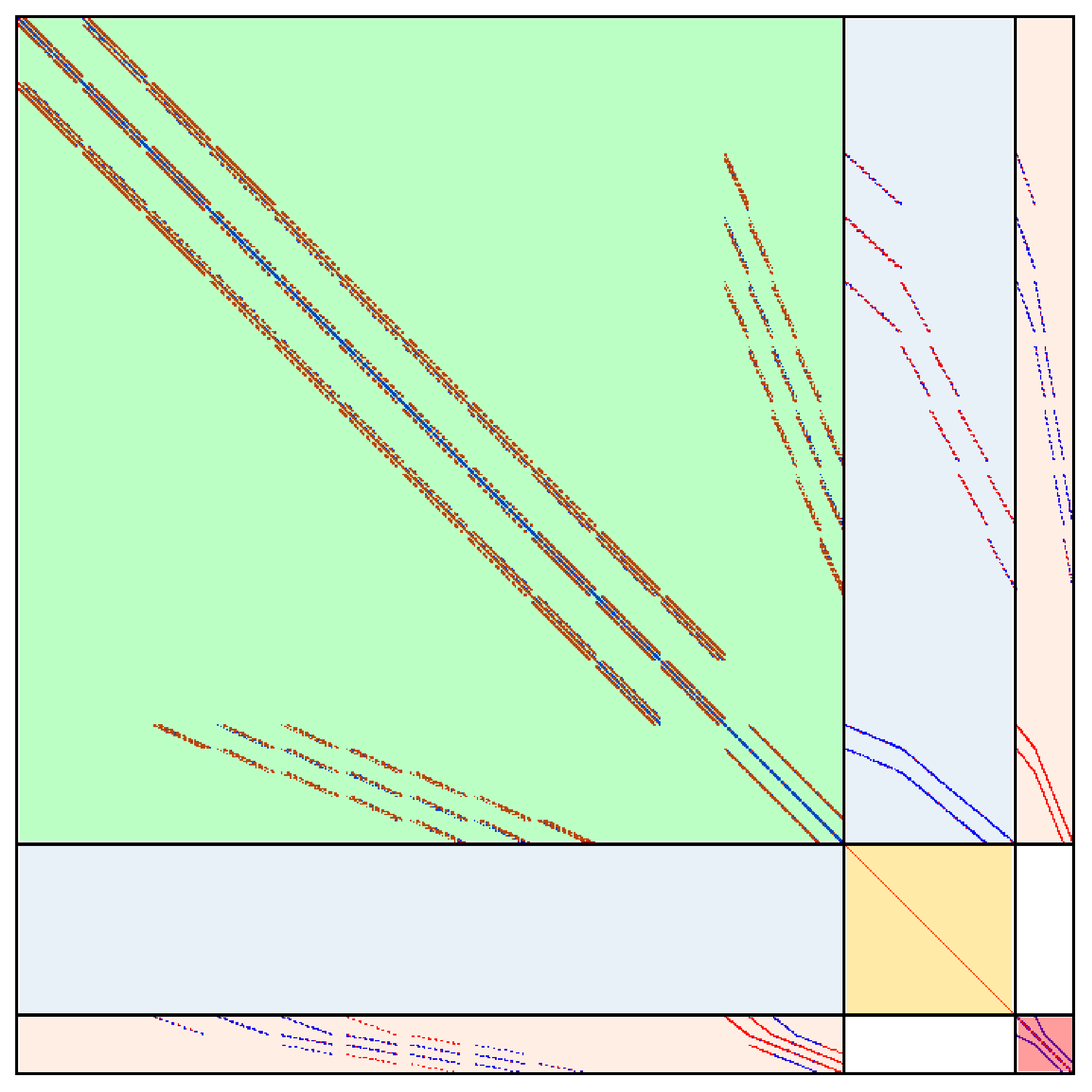}}
  \hfill\null
  \caption{Non-zero pattern of $\mathcal{J}$ with the evolution of the stick/slip/open fracture regions.}
  \label{fig:patterns}
\end{figure}

\section{Preconditioning framework}
\label{sec:preconditioner}

A preconditioner $\mathcal{M}^{-1}$ of $\mathcal{J}$ is a non-singular operator whose application to a vector resembles as much as possible the action of $\mathcal{J}^{-1}$. The exact application of $\mathcal{J}^{-1}$ to some vector $\mathbf{w}\in\mathbb{R}^{n_u+n_t+n_p}$ provides the vector $\mathbf{v}\in\mathbb{R}^{n_u+n_t+n_p}$ such that:
\begin{equation}
    \left\{ \begin{array}{ccccccc}
    A\mathbf{v}_u &+& C_1\mathbf{v}_t &+& Q_1\mathbf{v}_p &=& \mathbf{w}_u \\
    C_2\mathbf{v}_u &-& H\mathbf{v}_t & & &=& \mathbf{w}_t \\
    Q_2\mathbf{v}_u & & &+& T\mathbf{v}_p &=& \mathbf{w}_p \end{array} \right. ,
    \label{eq:ex_Jac_appl}
\end{equation}
with $\mathbf{v}_u,\mathbf{w}_u\in\mathbb{R}^{n_u}$, $\mathbf{v}_t,\mathbf{w}_t\in\mathbb{R}^{n_t}$, and $\mathbf{v}_p,\mathbf{w}_p\in\mathbb{R}^{n_p}$ natural subvectors of $\mathbf{v},\mathbf{w}$, respectively. The objective is to approximate the solution to the multi-physics system \eqref{eq:ex_Jac_appl} by exploiting the physics-based variable partitioning. The system is first reduced to a single-physics problem, and then prolonged back to the full multi-physics space. According to the selected sequence of reductions, different algorithms may arise. 

\subsection{Method no. 1: t-p-u approach}

Traction and pressure variables live on the fractures and are mutually decoupled independently on the stick/slip/ open region partitioning. Therefore, it is natural to exploit this condition and perform a simultaneous reduction of both variable sets onto the displacement space. This corresponds to compute $\mathbf{v}_t$ and $\mathbf{v}_p$ from the second and third equation of \eqref{eq:ex_Jac_appl}, respectively, and introduce them in the first equation, thus eliminating both physics from the equilibrium equation. 
Recall, however, that
$\Mat{H}$ is singular, so a regular surrogate is
needed. 
A block diagonal approximation can be used instead, where each block is the $3\times3$ local stabilization matrix computed for each fracture element.
Denoting with $\Mat{\tH}$ such a block-diagonal approximation, we have:
\begin{subequations}
\begin{align}
    \mathbf{v}_t &\simeq -\Mat{\tH}^{-1} \left( \mathbf{w}_t - \Mat{C}_2 \mathbf{v}_u \right), \label{eq:t_reduc1} \\
    \mathbf{v}_p &= \Mat{T}^{-1} \left( \mathbf{w}_p - \Mat{Q}_2 \mathbf{v}_u \right). \label{eq:p_reduc1}
\end{align}
\label{eq:tp_reduc1}\null
\end{subequations}
With \eqref{eq:t_reduc1} and \eqref{eq:p_reduc1}, the first equation of \eqref{eq:ex_Jac_appl} becomes:
\begin{equation}
    \left( A + C_1 \Mat{\tH}^{-1} C_2 - Q_1 T^{-1} Q_2 \right) \mathbf{v}_u \simeq \mathbf{w}_u + C_1 \Mat{\tH}^{-1} \mathbf{w}_t - Q_1 T^{-1} \mathbf{w}_p,
    \label{eq:u_1}
\end{equation}
which is a single-physics equilibrium equation on the 3D domain where the elimination of fracture tractions and pressures introduces fictitious stiffness contributions. The matrix at the left-hand side of \eqref{eq:u_1} is the Schur complement $S$: 
\begin{equation}
    S = A + C_1 \Mat{\tH}^{-1} C_2 - Q_1 T^{-1} Q_2.
    \label{eq:Schur_1}
\end{equation}
Solution to \eqref{eq:u_1} provides $\mathbf{v}_u$, which, introduced into equations \eqref{eq:tp_reduc1}, yields the final vector $\mathbf{v}$. The multi-physics reduction order performed in this case is traction-pressure-displacement (t-p-u) and is schematically summarized in Figure \ref{fig:MPReduction}.

\begin{figure}
    \centering
    \includegraphics[width=0.6\linewidth]{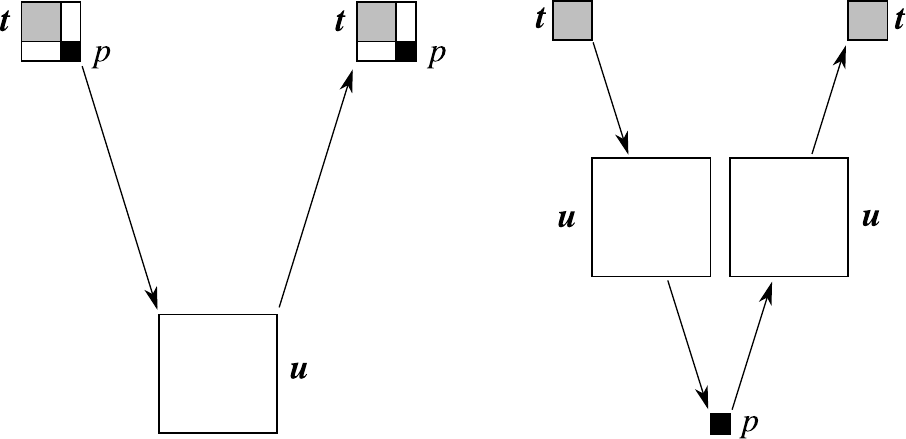}
    \caption{Schematic representation of the multi-physics reduction preconditioning framework: t-p-u (left) and t-u-p (right) approach.}
    \label{fig:MPReduction}
\end{figure}

The computation and inversion of $S$ in \eqref{eq:Schur_1} cannot be performed exactly. The Schur complement is explicitly approximated by $\tS$:
\begin{equation}
    S \simeq \tS = A + C_1 \Mat{\tH}^{-1} C_2 - Q_1 \tT^{-1} Q_2,
    \label{eq:appSchur_1}
\end{equation}
where $\tT$ is a diagonal surrogate for $T$. The inverse $\tS^{-1}$ can be applied inexactly by means of an AMG operator, which can be efficiently used in mechanical problems preserving a linear complexity with respect to the problem size. This is a key property to guarantee the solver scalability. Recent examples of effective AMG preconditioners are, for instance, taken from the References \cite{brandt2014bootstrap,
dambra2018bootcmatch, dambra2019improving, paludetto2019novel}. In this work, we use an aggregation-based multigrid as
the reference AMG operator. Specifically, the application of $\tS^{-1}$ is approximated by GAMG \cite{may2016extreme}, the
state-of-the-art aggregation based multigrid provided by the PETSc package
\cite{petsc-user-ref}. 

The construction and application of the resulting preconditioning operator $\mathcal{M}_1^{-1}$ with the t-p-u approach is summarized in Algorithms~\ref{alg:cpt_tpu} and~\ref{alg:app_tpu}. The $\text{Bdiag}(K,n)$ operator gives a matrix with the $n\times n$ diagonal blocks of $K$, while the $\text{AMG}(K,\mathbf{y})$ operator applies the selected AMG preconditioner of $K$ to the vector $\mathbf{y}$. Since $n_u$ is generally much larger than $n_t$ and $n_p$, the cost for applying the exact inverse of $\tH$ and $T$ is negligible with respect to the AMG algorithm for $\tS$. Hence, the latter can be roughly assumed as the cost per iteration for the $\mathcal{M}_1^{-1}$ application. 

\begin{algorithm}
\caption{ {\sc Preconditioner Construction: t-p-u approach}
[$\tH$, $\tS$]=cpt\_tpu($\mathcal{J}$).}
\label{alg:cpt_tpu}
\begin{algorithmic}[1]
\State $\tH = \text{Bdiag}(H,3)$;
\State $\tT = \text{diag}(T)$;
\State $\tS = A + C_1 \tH^{-1} C_2 - Q_1 \tT^{-1} Q_2$;
\end{algorithmic}
\end{algorithm}

\begin{algorithm}
\caption{ {\sc Preconditioner Application: t-p-u approach}
[$\mathbf{v}_u$, $\mathbf{v}_t$, $\mathbf{v}_p$]=app\_tpu($\mathcal{J}$, $\tH$, $\tS$, $\mathbf{w}_u$, $\mathbf{w}_t$, $\mathbf{w}_p$).}
\label{alg:app_tpu}
\begin{algorithmic}[1]
\State $\mathbf{t}_p = T^{-1} \mathbf{w}_p$;
\State $\mathbf{t}_t = \tH^{-1} \mathbf{w}_t$;
\State $\mathbf{t}_u = \mathbf{w}_u + C_1 \mathbf{t}_t - Q_1 \mathbf{t}_p$;
\State $\mathbf{v}_u = \text{AMG}(\tS, \mathbf{t}_u)$;
\State $\mathbf{s}_p = Q_2 \mathbf{v}_u$;
\State $\mathbf{s}_t = C_2 \mathbf{v}_u$;
\State $\mathbf{v}_p = \mathbf{t}_p - T^{-1} \mathbf{s}_p$;
\State $\mathbf{v}_t = - \mathbf{t}_t + \tH^{-1} \mathbf{s}_t$;
\end{algorithmic}
\end{algorithm}

From an algebraic viewpoint, the preconditioning operator $\mathcal{M}_1^{-1}$ arising from the t-p-u approach can be written as an inexact block LDU factorization of $\mathcal{J}$. 
Using the permutation matrix $\mathcal{Q}_1$:
\begin{equation}
  \mathcal{Q}_1 = \begin{bmatrix}
    \Mat{0} & \Mat{I}_t & \Mat{0} \\
    \Mat{0} & \Mat{0} & \Mat{I}_p \\
    \Mat{I}_u & \Mat{0} & \Mat{0} \\
  \end{bmatrix},
  \label{eq:Q1}
\end{equation}
where $\Mat{I}_n$ is the identity matrix in $\mathbb{R}^{n_n\times n_n}$ and $\Mat{0}$ the zero matrix of proper size,
the block LDU factorization reads:
\begin{equation}
  \mathcal{Q}_1 \mathcal{J} \mathcal{Q}_1^T \simeq \mathcal{L}_1 \mathcal{D}_1 \mathcal{U}_1,
\end{equation}
with:
\begin{align}
  \mathcal{L}_1 &= \begin{bmatrix}
    \Mat{I}_t & \Mat{0} & \Mat{0} \\
    \Mat{0} & \Mat{I}_p & \Mat{0} \\
    -\Mat{C}_1\Mat{\tH}^{-1} & \Mat{Q}_1\Mat{T}^{-1} & \Mat{I}_u \\
  \end{bmatrix}, &&&
  \mathcal{D}_1 &= \begin{bmatrix}
    -\Mat{\tH} & \Mat{0} & \Mat{0} \\
    \Mat{0} & \Mat{T} & \Mat{0} \\
    \Mat{0} & \Mat{0} & \Mat{\tS} \\
  \end{bmatrix} & \text{and} &&
  \mathcal{U}_1 &= \begin{bmatrix}
    \Mat{I}_t & \Mat{0} & -\Mat{\tH}^{-1}\Mat{C}_2 \\
    \Mat{0} & \Mat{I}_p & \Mat{T}^{-1}\Mat{Q}_2 \\
    \Mat{0} & \Mat{0} & \Mat{I}_u \\
  \end{bmatrix}.
  \label{eq:LDU1}
\end{align}
Hence, the final algebraic expression of $\mathcal{M}_1^{-1}$ is:
\begin{equation}
  \mathcal{M}_1^{-1} = \begin{bmatrix}
    \Mat{I}_t & \Mat{0} & \Mat{\tH}^{-1}\Mat{C}_2 \\
    \Mat{0} & \Mat{I}_p & -\Mat{T}^{-1}\Mat{Q}_2 \\
    \Mat{0} & \Mat{0} & \Mat{I}_u \\
  \end{bmatrix} \begin{bmatrix}
    -\Mat{\tH}^{-1} & \Mat{0} & \Mat{0} \\
    \Mat{0} & \Mat{T}^{-1} & \Mat{0} \\
    \Mat{0} & \Mat{0} & \Mat{\tS}^{-1} \\
  \end{bmatrix} \begin{bmatrix}
    \Mat{I}_t & \Mat{0} & \Mat{0} \\
    \Mat{0} & \Mat{I}_p & \Mat{0} \\
    \Mat{C}_1\Mat{\tH}^{-1} & -\Mat{Q}_1\Mat{T}^{-1} & \Mat{I}_u \\
  \end{bmatrix}.
  \label{eq:P1}
\end{equation}

\begin{rem}
The multi-physics reduction approach proposed herein can be equivalently recast in other ways as well. Since we use a twofold approximation for $T^{-1}$, i.e., exact in $\mathcal{L}_1$, $\mathcal{U}_1$ and inexact in $\tS$, $\mathcal{M}_1^{-1}$ can be regarded as a member of the mixed constraint preconditioner class \cite{Bergamaschi2008,ferjangam08,Ferronato2010}. Similarly, the upper and lower block triangular factors in \eqref{eq:P1} play the role of decoupling operators for the original multi-physics problem and are the outcome of the general-purpose algebraic procedure defined in \cite{Ferronato2019}. Finally, $\mathcal{M}_1^{-1}$ can be also regarded as an example of application in a $3\times 3$ block non-symmetric context of the multigrid reduction framework \cite{Bui2018,Bui2020}, where fracture and body variables play the role of fine and coarse nodes, respectively, and $\tH$ replaces $H$ in matrix $\mathcal{J}$.
\end{rem}

Let us introduce the matrices:
\begin{subequations}
\begin{align}
    E_H &= I_t - \tH^{-1} H, \label{eq:EH} \\
    E_S &= I_u - \tS^{-1} S, \label{eq:ES}
\end{align}
\label{eq:E_tpu}\null
\end{subequations}
which can be regarded as a matrix measure of the quality of the approximations $\tH$ and $\tS$ introduced in $\mathcal{M}_1^{-1}$. The following result holds.

\begin{prop}
\label{th:eig_tpu}
The eigenvalues $\lambda_1 \in \mathbb{C}$ of the preconditioned matrix with the t-p-u approach are either 1, with multiplicity $n_p$, or such that:
\begin{equation}
    \left| \lambda_1 - 1 \right| \leq \left( 1 + \zeta \right) \max\{\varepsilon_H,\varepsilon_S\},
    \label{eq:eig_tpu}
\end{equation}
with $\varepsilon_H=\|E_H\|$, $\varepsilon_S=\|E_S\|$, $\zeta=\|\mathcal{Z}\|$ and
\begin{equation}
    \mathcal{Z} = \left[ \begin{array}{cc}
    \tH^{-1} C_2 \tS^{-1} C_1 & -\tH^{-1} C_2 \\ \tS^{-1} C_1 & 0 \end{array} \right]
    \label{eq:Z_prop1}
\end{equation}
for any compatible matrix norm.
\end{prop}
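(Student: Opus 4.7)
The plan is to compute $\mathcal{M}_1^{-1}(\mathcal{Q}_1\mathcal{J}\mathcal{Q}_1^T)$ explicitly from the factored form \eqref{eq:P1}--\eqref{eq:LDU1}, then exploit a decoupling of the pressure block to isolate the eigenvalue $1$ with multiplicity $n_p$, and finally reduce the remaining spectrum to that of a $2\times 2$ block operator that factorises cleanly in terms of $E_H$, $E_S$ and $\mathcal{Z}$.

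First I would carry out the block multiplication step by step. In $\mathcal{L}_1^{-1}(\mathcal{Q}_1\mathcal{J}\mathcal{Q}_1^T)$, the $(3,1)$ block simplifies to $C_1(I_t-\tH^{-1}H)=C_1 E_H$, the $(3,2)$ block cancels, and the $(3,3)$ block becomes the exact Schur complement $S=A+C_1\tH^{-1}C_2-Q_1T^{-1}Q_2$. Multiplying on the left by $\mathcal{D}_1^{-1}=\mathrm{diag}(-\tH^{-1},T^{-1},\tS^{-1})$ replaces $\tH^{-1}H$ by $I_t-E_H$ and $\tS^{-1}S$ by $I_u-E_S$, and the final multiplication by $\mathcal{U}_1^{-1}$ eliminates the $(1,3)$ and $(2,3)$ off-diagonals modulo the same errors. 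The resulting matrix reads
\begin{equation*}
\mathcal{M}_1^{-1}\mathcal{Q}_1\mathcal{J}\mathcal{Q}_1^T=
\begin{bmatrix}
I_t-E_H+\tH^{-1}C_2\tS^{-1}C_1 E_H & 0 & -\tH^{-1}C_2 E_S \\
-T^{-1}Q_2\tS^{-1}C_1 E_H & I_p & T^{-1}Q_2 E_S \\
\tS^{-1}C_1 E_H & 0 & I_u-E_S
\end{bmatrix}.
\end{equation*}

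Next, since the second block column has $I_p$ on the diagonal and zeros above and below, a symmetric block permutation that moves the pressure block to the last position makes the characteristic matrix block lower triangular. This gives the factorisation $\det(\lambda I-\mathcal{M}_1^{-1}\mathcal{Q}_1\mathcal{J}\mathcal{Q}_1^T)=(\lambda-1)^{n_p}\det(\lambda I-\tilde{\mathcal{M}})$, where $\tilde{\mathcal{M}}$ is the $(n_t+n_u)\times(n_t+n_u)$ matrix obtained by deleting the pressure row and column. This accounts for the $n_p$ eigenvalues equal to $1$; the remaining eigenvalues are those of $\tilde{\mathcal{M}}$. A direct rearrangement then exhibits the factorisation $\tilde{\mathcal{M}}-I=(\mathcal{Z}-I)\,\mathrm{diag}(E_H,E_S)$ with $\mathcal{Z}$ precisely \eqref{eq:Z_prop1}.

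Any eigenvalue $\lambda_1\neq 1$ therefore belongs to the spectrum of $(\mathcal{Z}-I)\,\mathrm{diag}(E_H,E_S)$, and since compatible matrix norms dominate the spectral radius we obtain
\[
|\lambda_1-1|\le\|\mathcal{Z}-I\|\,\|\mathrm{diag}(E_H,E_S)\|\le(1+\zeta)\max\{\varepsilon_H,\varepsilon_S\},
\]
using submultiplicativity, the triangle inequality $\|\mathcal{Z}-I\|\le\|\mathcal{Z}\|+\|I\|$ together with $\|I\|=1$, and the block-diagonal identity $\|\mathrm{diag}(E_H,E_S)\|=\max\{\|E_H\|,\|E_S\|\}$ that holds for any induced operator norm. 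The main obstacle I anticipate is the bookkeeping in the first step: the error matrices $E_H$ and $E_S$ must emerge in exactly the six locations dictated above, with correct signs and ordering so that the factorisation $(\mathcal{Z}-I)\,\mathrm{diag}(E_H,E_S)$ of $\tilde{\mathcal{M}}-I$ is visible by inspection, while keeping track of the sign convention $-\tH$ in $\mathcal{D}_1$ and of the off-diagonal cancellations produced by $\mathcal{U}_1^{-1}$.
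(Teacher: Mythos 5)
Your proposal is correct and follows essentially the same route as the paper: compute $\mathcal{M}_1^{-1}\mathcal{Q}_1\mathcal{J}\mathcal{Q}_1^T$ explicitly, peel off the $n_p$-fold eigenvalue $1$ from the decoupled pressure block, and factor the remaining $(n_t+n_u)$-dimensional reduced operator as $I + (\mathcal{Z}-I)\,\mathrm{diag}(E_H,E_S)$ before applying submultiplicativity and the triangle inequality. Your block-permutation argument for the block-triangular determinant factorisation is simply an explicit justification of the paper's statement that the pressure row and column can be dropped, and your side remark restricting to induced norms (so that $\|I\|=1$ and $\|\mathrm{diag}(E_H,E_S)\|=\max\{\varepsilon_H,\varepsilon_S\}$) is in fact a slightly more careful reading of the hypotheses than the paper's ``for any compatible matrix norm''.
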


\begin{proof}
Recalling equations \eqref{eq:Q1} and \eqref{eq:P1} and introducing the error matrices \eqref{eq:E_tpu}, the preconditioned matrix with the t-p-u approach reads:
\begin{equation}
    \mathcal{M}_1^{-1} \mathcal{Q}_1 \mathcal{J} \mathcal{Q}_1^T = \left[ \begin{array}{ccc}
    I_t + \left( \tH^{-1} C_2 \tS^{-1} C_1 - I_t \right) E_H & 0 & -\tH^{-1} C_2 E_S \\ -T^{-1} Q_2 \tS^{-1} C_1 E_H & I_p & T^{-1} Q_2 E_S \\ \tS^{-1} C_1 E_H & 0 & I_u - E_S \end{array} \right],
    \label{eq:PJ_tpu}
\end{equation}
which has $n_p$ unitary eigenvalues. The remaining $n_t+n_u$ eigenvalues are those of the matrix $\mathcal{H}$ obtained by dropping the second block row and column from \eqref{eq:PJ_tpu}:
\begin{equation}
    \mathcal{H} = I_{t+u} + \left( \mathcal{Z} - I_{t+u} \right) \mathcal{E}_1,
    \label{eq:Kdef}
\end{equation}
with $I_{t+u}$ the identity matrix of order $n_t+n_u$ and $\mathcal{E}_1$:
\begin{equation}
    \mathcal{E}_1 = \left[ \begin{array}{cc}
    E_H & 0 \\ 0 & E_S \end{array} \right].
    \label{eq:Edef}
\end{equation}
The eigenvalues of $\mathcal{H}$ satisfy the bound \eqref{eq:eig_tpu}, thus closing the proof.
\end{proof}

\begin{rem}
Proposition \ref{th:eig_tpu} shows that the distance of $\tH^{-1}H$ from the identity and 
the approximation quality of $\tS^{-1}$ are key factors for the overall performance of $\mathcal{M}_1^{-1}$. 
While $\tH$ is fixed,
notice, however, that $\tS$ of equation \eqref{eq:appSchur_1} has algebraic properties that change with the fracture state and the evolution of the stick/slip/open regions throughout the active-set algorithm.  
In stick mode,
the contribution $\Mat{C}_1 \Mat{\tH}^{-1} \Mat{C}_2$ is symmetric positive semidefinite and $Q_2=0$. Hence, $\tS$ is SPD.  
Also in slip mode the contribution $\Mat{C}_1 \Mat{\tH}^{-1} \Mat{C}_2$ is
positive definite and $Q_2=0$, so $\tS$ remains positive definite, though slightly non-symmetric. 
With open elements, however, $\Mat{Q_2}$ depends on the current pressure solution and no theoretical considerations can be made in general. In these conditions,
$\tS$ is an indefinite non-symmetric matrix. 
\end{rem}

\subsection{Method no. 2: t-u-p approach}

An alternative multi-physics reduction sequence relies on the scheme sketched in the rightmost panel of Figure \ref{fig:MPReduction}. Introducing the traction variables \eqref{eq:t_reduc1} into the first equation of system \eqref{eq:ex_Jac_appl} yields:
\begin{equation}
    \left( A + C_1 \tH^{-1} C_2 \right) \mathbf{v}_u + Q_1 \mathbf{v}_p \simeq \mathbf{w}_u + C_1 \tH^{-1} \mathbf{w}_t,
    \label{eq:u_2}
\end{equation}
where the matrix:
\begin{equation}
    S_1 = A + C_1 \tH^{-1} C_2
    \label{eq:Schur1_2}
\end{equation}
is the first-level Schur complement. From a physical viewpoint, $S_1$ is an elasticity matrix with fictitious stiffness contributions arising along the fractures from the traction elimination. Then, a second reduction is needed by computing $\mathbf{v}_u$ from \eqref{eq:u_2} and introducing it in the third equation of \eqref{eq:ex_Jac_appl}:
\begin{equation}
    \left( T - Q_2 S_1^{-1} Q_1 \right) \mathbf{v}_p \simeq \mathbf{w}_p - Q_2 S_1^{-1} \left( \mathbf{w}_u + C_1 \tH^{-1} \mathbf{w}_t \right).
    \label{eq:p_2}
\end{equation}
The matrix at the left-hand side of equation \eqref{eq:p_2} is the second-level Schur complement:
\begin{equation}
    S_2 = T - Q_2 S_1^{-1} Q_1,
    \label{eq:Schur2_2}
\end{equation}
which, from a physical point of view, represents a modified transmissibility matrix including the effect of the stiffness of the 3D medium surrounding the fractures. Hence, the multi-physics reduction order is traction-displacement-pressure (t-u-p).

The computation of $S_1$ can be performed exactly, but its inverse has to be approximated. Since its nature is the same as that of $S$ of equation \eqref{eq:Schur_1}, we can effectively use an AMG operator, such as GAMG. We denote with $\tS_1^{-1}$ the operator that approximately applies $S_1^{-1}$. By distinction, $S_2$ cannot be computed exactly. Recalling the physical interpretation of $T$ and $S_1$, we can approximate the contribution $Q_2 S_1^{-1} Q_1$ by the diagonal fixed-stress matrix introduced as a preconditioner in \cite{whicastch16,caswhifer16}. Denoting by $S_K$ such a matrix, we approximate $S_2$ as:
\begin{equation}
    S_2 \simeq \tS_2 = T - S_K,
    \label{eq:app_Schur2_2}
\end{equation}
where the diagonal entries of $S_K$ are:
\begin{equation}
    [S_K]_k = \frac{|\Omega_k|}{\overline{K}_k}, \qquad k=1,\ldots,n_p,
    \label{eq:FS}
\end{equation}
with $|\Omega_k|$ a measure of the volume of the cells surrounding the $k$-th fracture element and $\overline{K}_k$ an estimate of the associated bulk modulus.

\begin{rem}
\label{rem:FS}
The computation of $|\Omega_k|$ and $\overline{K}_k$ in equation \eqref{eq:FS} can be carried out using the information arising from the discretization grid and the material properties. A more general algebraic strategy, however, can be implemented by following the ideas sketched in \cite{caswhifer16}. Recalling the definitions of $Q_1$ and $Q_2$, provided in \eqref{eq:Q1def} and \eqref{eq:Q2def}, respectively, it can be observed that the $k$-th column $Q_1^{(k)}$ of $Q_1$ and the $k$-th row $Q_2^{(k),T}$ of $Q_2$, $k=1,\ldots,n_p$, are sparse vectors with non-zero entries only in a small number of components corresponding to the indices of the degrees of freedom associated to some nodes of the 3D cells surrounding the $k$-th 2D fracture element. Let us denote with $\mathcal{K}_1^{(k)}$ and $\mathcal{K}_2^{(k)}$ the subsets of $\mathcal{K}=\{1,\ldots,n_u\}\subset\mathbb{N}$ containing the indices of the non-zero components of $Q_1^{(k)}$ and $Q_2^{(k),T}$, respectively. In general, $\mathcal{K}_1^{(k)}\neq\mathcal{K}_2^{(k)}$, with $\mathcal{K}_2^{(k)}$ possibly being the empty set. Setting $\mathcal{K}^{(k)}=\mathcal{K}_1^{(k)}\cup\mathcal{K}_2^{(k)}$, we define $R^{(k)}$ as the restriction operator from $\mathbb{R}^{n_u}$ to $\mathbb{R}^{|\mathcal{K}^{(k)}|}$ such that:
\begin{equation}
    S_1^{(k)} = R^{(k),T} S_1 R^{(k)}
    \label{eq:Ak}
\end{equation}
is the sub-matrix of $S_1$ made by the entries lying in the rows and columns with indices in $\mathcal{K}^{(k)}$. Since $S_1^{(k)}$ is a diagonal block of an SPD matrix, it is non-singular and can be regularly inverted. The $k$-th diagonal entry of $S_K$ can be therefore computed as:
\begin{equation}
    [S_K]_k = Q_2^{(k),T} S_1^{(k),-1} Q_1^{(k)}, \qquad k=1,\ldots,n_p,
    \label{eq:FS_algebraic}
\end{equation}
which provides a fully algebraic interpretation of the classical fixed-stress contribution \eqref{eq:FS}.
\end{rem}

The construction and application of the resulting preconditioning operator $\mathcal{M}_2^{-1}$ with the t-u-p approach is summarized in Algorithms \ref{alg:cpt_tup} and \ref{alg:app_tup}. The FS($Q_2,S_1,Q_1$) operator gives the diagonal matrix $S_K$ with entries computed as in equation \eqref{eq:FS_algebraic}. Since $n_u$ is generally significantly larger than $n_t$ and $n_p$, the cost for applying the exact inverse of $\tH$ and $\tS_2$ is negligible with respect to the AMG algorithm for $\tS_1^{-1}$. Therefore, the cost per iteration for the $\mathcal{M}_2^{-1}$ application is roughly two AMG calls, i.e., twice that of $\mathcal{M}_1^{-1}$.

\begin{algorithm}
\caption{ {\sc Preconditioner Construction: t-u-p approach}
[$\tH$, $S_1$, $\tS_2$]=cpt\_tup($\mathcal{J}$).}
\label{alg:cpt_tup}
\begin{algorithmic}[1]
\State $\tH = \text{Bdiag}(H,3)$;
\State $S_1 = A + C_1 \tH^{-1} C_2$;
\State $\tS_2 = T - \text{FS}(Q_2,S_1,Q_1)$;
\end{algorithmic}
\end{algorithm}

\begin{algorithm}
\caption{ {\sc Preconditioner Application: t-u-p approach}
[$\mathbf{v}_u$, $\mathbf{v}_t$, $\mathbf{v}_p$]=app\_tup($\mathcal{J}$, $\tH$, $S_1$, $\tS_2$, $\mathbf{w}_u$, $\mathbf{w}_t$, $\mathbf{w}_p$).}
\label{alg:app_tup}
\begin{algorithmic}[1]
\State $\mathbf{t}_t = \tH^{-1} \mathbf{w}_t$;
\State $\mathbf{t}_u = \mathbf{w}_u + C_1 \mathbf{t}_t$;
\State $\mathbf{s}_u = \text{AMG}(S_1,\mathbf{t}_u)$;
\State $\mathbf{t}_p = \mathbf{w}_p - Q_2 \mathbf{s}_u$;
\State $\mathbf{v}_p = \tS_2^{-1} \mathbf{t}_p$;
\State $\mathbf{t}_u = Q_1 \mathbf{v}_p$;
\State $\mathbf{v}_u = \text{AMG}(S_1, \mathbf{t}_u)$;
\State $\mathbf{v}_u \gets \mathbf{s}_u - \mathbf{v}_u$;
\State $\mathbf{s}_t = C_2 \mathbf{v}_u$;
\State $\mathbf{v}_t = -\mathbf{t}_t + \tH^{-1} \mathbf{s}_t$;
\end{algorithmic}
\end{algorithm}

Similarly to the t-p-u approach, the preconditioning operator $\mathcal{M}_2^{-1}$ can be written as an inexact block LDU factorization of $\mathcal{J}$. With
the permutation matrix $\mathcal{Q}_2$:
\begin{equation}
  \mathcal{Q}_2 = \begin{bmatrix}
    \Mat{0} & \Mat{I}_t & \Mat{0} \\
    \Mat{I}_u & \Mat{0} & \Mat{0} \\
    \Mat{0} & \Mat{0} & \Mat{I}_p \\
  \end{bmatrix},
  \label{eq:Q2}
\end{equation}
the block LDU factorization reads:
\begin{equation}
  \mathcal{Q}_2 \mathcal{J} \mathcal{Q}_2^T \simeq \mathcal{L}_2 \mathcal{D}_2 \mathcal{U}_2,
\end{equation}
with:
\begin{align}
  \mathcal{L}_2 &= \begin{bmatrix}
    \Mat{I}_t & \Mat{0} & \Mat{0} \\
    -\Mat{C}_1\Mat{\tH}^{-1} & \Mat{I}_u & \Mat{0} \\
    \Mat{0} & \Mat{Q}_2\Mat{\tS}_1^{-1} & \Mat{I}_p \\
  \end{bmatrix}, &&&
  \mathcal{D}_2 &= \begin{bmatrix}
    -\Mat{\tH} & \Mat{0} & \Mat{0} \\
    \Mat{0} & \Mat{S}_1 & \Mat{0} \\
    \Mat{0} & \Mat{0} & \Mat{\tS}_2 \\
  \end{bmatrix} & \text{and} &&
  \mathcal{U}_2 &= \begin{bmatrix}
    \Mat{I}_t & -\Mat{\tH}^{-1}\Mat{C}_2 & \Mat{0} \\
    \Mat{0} & \Mat{I}_u & \Mat{\tS}_1^{-1}\Mat{Q}_1 \\
    \Mat{0} & \Mat{0} & \Mat{I}_p \\
  \end{bmatrix}.
  \label{eq:LDU2}
\end{align}
The final algebraic expression of $\mathcal{M}_2^{-1}$ is therefore:
\begin{equation}
    \mathcal{M}_2^{-1} = \left[ \begin{array}{ccc}
    I_t & \tH^{-1}C_2 & -\tH^{-1} C_2 \tS_1^{-1} Q_1 \\
    0 & I_u & -\tS_1^{-1} Q_1 \\
    0 & 0 & I_p \end{array} \right]
    \left[ \begin{array}{ccc}
    -\tH^{-1} & 0 & 0 \\
    0 & \tS_1^{-1} & 0 \\
    0 & 0 & \tS_2^{-1} \end{array} \right]
    \left[ \begin{array}{ccc}
    I_t & 0 & 0 \\
    C_1 \tH^{-1} & I_u & 0 \\
    -Q_2 \tS_1^{-1} C_1 \tH^{-1} & -Q_2 \tS_1^{-1} & I_p \end{array} \right].
    \label{eq:P2}
\end{equation}

\begin{rem}
Like the t-p-u approach, also $\mathcal{M}_2^{-1}$ can be equivalently recast in other ways. For instance, it can be viewed again as a mixed constraint preconditioner applied to the matrix:
\begin{equation}
    \hat{\mathcal{J}} = \left[ \begin{array}{cc}
    \mathcal{A} & \mathcal{B}_1 \\
    \mathcal{B}_2 & T \end{array} \right],
    \qquad \mbox{ with } \; \mathcal{A} = \left[ \begin{array}{cc} 
    -H & C_2 \\ C_1 & A \end{array} \right], \quad \mathcal{B}_1 = \left[ \begin{array}{c}
    0 \\ Q_1 \end{array} \right], \quad \mathcal{B}_2 = \left[ \begin{array}{cc} 0 & Q_2 \end{array} \right],
    \label{eq:22_block}
\end{equation}
where the inverse of the leading block $\mathcal{A}$ is approximated by an inner inexact constraint preconditioner \cite{Bergamaschi2007,Janna2012}. Alternatively, $\mathcal{M}_2^{-1}$ can be regarded as a double application of the multigrid reduction framework: (i) to matrix $\mathcal{A}$, with $\tH$ instead of $H$,
and (ii) to matrix $\mathcal{S}$:
\begin{equation}
    \mathcal{S} = \left[ \begin{array}{cc}
    S_1 & Q_1 \\
    Q_2 & T \end{array} \right].
    \label{eq:RP_tup2}
\end{equation}
\end{rem}

Let us introduce the matrices:
\begin{subequations}
\begin{align}
    E_{S_1} &= I_u - \tS_1^{-1} S_1, \label{eq:ES1} \\
    E_{S_2} &= I_p - \tS_2^{-1} \hat{S}_2, \label{eq:ES2}
\end{align}
\label{eq:E2def}\null
\end{subequations}
where $\hat{S}_2=T-Q_2\tS_1^{-1}Q_1$. Along with $E_H$ already introduced in equation \eqref{eq:EH}, $E_{S_1}$ and $E_{S_2}$ are matrix measures of the quality of the approximations $\tS_1$ and $\tS_2$ introduced in $\mathcal{M}_2^{-1}$. The following result holds.

\begin{prop}
\label{thm:eig_tup}
The eigenvalues $\lambda_2\in\mathbb{C}$ of the preconditioned matrix with the t-u-p approach are such that:
\begin{equation}
    |\lambda_2-1| \leq (1+\gamma) \max\{\varepsilon_H,\varepsilon_{S_1}, \varepsilon_{S_2}\},
    \label{eq:eigbound2}
\end{equation}
with $\varepsilon_H=\|E_H\|$, $\varepsilon_{S_1}=\|E_{S_1}\|$, $\varepsilon_{S_2}=\|E_{S_2}\|$, $\gamma=\|\mathcal{G}\|$ and
\begin{equation}
    \mathcal{G} = \left[ \begin{array}{ccc}
    C_{2H}\hat{I}_QC_{1S} & -C_{2H}\hat{I}_Q & C_{2H}Q_{1S} \\
    \hat{I}_QC_{1S} & -Q_{1S}Q_{2S} & Q_{1S} \\
    -Q_{2S}C_{1S} & Q_{2S} & 0 \end{array} \right], 
    \label{eq:Gdef}
\end{equation}
\begin{equation}
    C_{1S} = \tS_1^{-1}C_1, \quad C_{2H} = \tH^{-1}C_2, \quad Q_{1S} = \tS_1^{-1}Q_1, \quad Q_{2S} = \tS_2^{-1}Q_2, \quad \hat{I}_Q = I_u + Q_{1S}Q_{2S},
    \label{eq:matdef}
\end{equation}
for any compatible matrix norm.
\end{prop}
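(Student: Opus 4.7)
The plan is to follow the same template as Proposition~\ref{th:eig_tpu} and obtain an explicit representation
\begin{equation*}
  \mathcal{M}_2^{-1}\mathcal{Q}_2\mathcal{J}\mathcal{Q}_2^T = I_{t+u+p} + (\mathcal{G}-I_{t+u+p})\mathcal{E}_2,
  \qquad
  \mathcal{E}_2 = \mathrm{diag}(E_H,E_{S_1},E_{S_2}),
\end{equation*}
from which the stated bound follows immediately: for any eigenvalue $\lambda_2$, the scalar $\lambda_2-1$ is an eigenvalue of $(\mathcal{G}-I_{t+u+p})\mathcal{E}_2$, whence $|\lambda_2-1|\leq\|(\mathcal{G}-I_{t+u+p})\mathcal{E}_2\|\leq(1+\|\mathcal{G}\|)\|\mathcal{E}_2\|$, and $\|\mathcal{E}_2\|\leq\max\{\varepsilon_H,\varepsilon_{S_1},\varepsilon_{S_2}\}$ for any matrix norm that collapses a block-diagonal into the max of its block norms (with equality in the spectral norm).

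To reach this representation, the first step is to recognize that, because the application \eqref{eq:P2} uses $\tS_1^{-1}$ rather than $S_1^{-1}$, the operator $\mathcal{M}_2$ actually factors as $\mathcal{L}_2\tilde{\mathcal{D}}_2\mathcal{U}_2$ with the inexact diagonal $\tilde{\mathcal{D}}_2=\mathrm{diag}(-\tH,\tS_1,\tS_2)$, i.e.\ the variant of \eqref{eq:LDU2} in which $S_1$ is replaced by $\tS_1$. Using the identities $\tH-H=\tH E_H$, $\tS_1-S_1=\tS_1 E_{S_1}$ and $\tS_2-\hat{S}_2=\tS_2 E_{S_2}$, a block-by-block expansion yields
\begin{equation*}
  \mathcal{Q}_2\mathcal{J}\mathcal{Q}_2^T - \mathcal{L}_2\tilde{\mathcal{D}}_2\mathcal{U}_2 = -\tilde{\mathcal{D}}_2\,\mathcal{E}_2,
\end{equation*}
so that left-multiplication by $\mathcal{M}_2^{-1}=\mathcal{U}_2^{-1}\tilde{\mathcal{D}}_2^{-1}\mathcal{L}_2^{-1}$ produces $\mathcal{M}_2^{-1}\mathcal{Q}_2\mathcal{J}\mathcal{Q}_2^T = I_{t+u+p} - \mathcal{U}_2^{-1}\tilde{\mathcal{D}}_2^{-1}\mathcal{L}_2^{-1}\tilde{\mathcal{D}}_2\mathcal{E}_2$ and reduces the proof to the block-matrix identity $\mathcal{U}_2^{-1}\tilde{\mathcal{D}}_2^{-1}\mathcal{L}_2^{-1}\tilde{\mathcal{D}}_2 = I_{t+u+p}-\mathcal{G}$, with $\mathcal{G}$ defined by \eqref{eq:Gdef}.

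The bulk of the work, and the main obstacle, lies in this last identification. Unlike in Proposition~\ref{th:eig_tpu}, where the reduction ordering left a trivial identity block and produced $n_p$ free unit eigenvalues that reduced the analysis to a $2\times 2$ block matrix, here the t-u-p sequence propagates all three error contributions through the successive eliminations and every block entry of $\mathcal{G}$ has to be recovered. Adopting the notation of \eqref{eq:matdef}, the intermediate product $\tilde{\mathcal{D}}_2^{-1}\mathcal{L}_2^{-1}\tilde{\mathcal{D}}_2$ collapses into a unit lower-triangular matrix with nontrivial entries $-C_{1S}$, $Q_{2S}C_{1S}$ and $-Q_{2S}$; left-multiplication by the upper-triangular $\mathcal{U}_2^{-1}$ then injects $C_{2H}$ and $Q_{1S}$ and, combining the two triangular couplings, generates exactly the $\hat{I}_Q=I_u+Q_{1S}Q_{2S}$ factors in the first two block rows of $\mathcal{G}$. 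Once this identity has been checked entry by entry, the bound \eqref{eq:eigbound2} follows as in the first paragraph.
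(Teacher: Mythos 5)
Your proof is correct and arrives at exactly the paper's central representation $\mathcal{M}_2^{-1}\mathcal{Q}_2\mathcal{J}\mathcal{Q}_2^T = \mathcal{I} + (\mathcal{G}-\mathcal{I})\mathcal{E}_2$ and the same norm bound, so the two arguments are the same at heart; what differs is only the bookkeeping used to establish that representation. The paper multiplies out the full product $\mathcal{M}_2^{-1}\mathcal{Q}_2\mathcal{J}\mathcal{Q}_2^T$ block by block, presents the result as \eqref{eq:precmat_tup}, and then recognizes by inspection that it factors as \eqref{eq:precmat_tup2}. You instead observe that the preconditioner is an exact block LDU with the inexact diagonal $\tilde{\mathcal{D}}_2=\mathrm{diag}(-\tH,\tS_1,\tS_2)$, so the residual $\mathcal{Q}_2\mathcal{J}\mathcal{Q}_2^T-\mathcal{L}_2\tilde{\mathcal{D}}_2\mathcal{U}_2=-\tilde{\mathcal{D}}_2\mathcal{E}_2$ is \emph{block-diagonal} and trivially verified, reducing the remaining work to the triangular conjugation $\mathcal{U}_2^{-1}\tilde{\mathcal{D}}_2^{-1}\mathcal{L}_2^{-1}\tilde{\mathcal{D}}_2 = \mathcal{I}-\mathcal{G}$. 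That reorganization is tidier: it makes visible where the factor $\hat{I}_Q=I_u+Q_{1S}Q_{2S}$ comes from (the composition of the lower- and upper-triangular couplings), and it explicitly flags a detail the paper leaves implicit, namely that the step $\|\mathcal{E}_2\|\le\max\{\varepsilon_H,\varepsilon_{S_1},\varepsilon_{S_2}\}$ requires a norm under which a block-diagonal operator reduces to the maximum of its block norms (as the spectral norm does). None of this changes the mathematical content; it is the same key lemma and the same bound, derived with a cleaner intermediate factorization.
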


\begin{proof}
Recalling equations \eqref{eq:Q2} and \eqref{eq:P2} and introducing the error matrices \eqref{eq:EH} and \eqref{eq:E2def}, the preconditioned matrix with the t-u-p approach reads:
\small
\begin{equation}
    \mathcal{M}_2^{-1} \mathcal{Q}_2 \mathcal{J} \mathcal{Q}_2^T = \left[ \begin{array}{ccc}
    I_t + \left[ -I_t+\tH^{-1}C_2 \left( I_u + \tS_1^{-1}Q_1\tS_2^{-1}Q_2 \right) \tS_1^{-1}C_1 \right] E_H & -\tH^{-1}C_2 \left( I_u + \tS_1^{-1}Q_1\tS_2^{-1}Q_2 \right) E_{S_1} & \tH^{-1}C_2\tS_1^{-1}Q_1 E_{S_2} \\
    \left( I_u + \tS_1^{-1}Q_1\tS_2^{-1}Q_2 \right) \tS_1^{-1}C_1 E_H & I_u - \left( I_u + \tS_1^{-1}Q_1\tS_2^{-1}Q_2 \right) E_{S_1} & \tS_1^{-1}Q_1 E_{S_2} \\
    -\tS_2^{-1}Q_2\tS_1^{-1}C_1 E_H & \tS_2^{-1}Q_2 E_{S_1} & I_p-E_{S_2} \end{array} \right].
    \label{eq:precmat_tup}
\end{equation}
\normalsize
By making use of the definitions \eqref{eq:Gdef} and \eqref{eq:matdef}, equation \eqref{eq:precmat_tup} can be re-written as:
\begin{equation}
    \mathcal{M}_2^{-1} \mathcal{Q}_2 \mathcal{J} \mathcal{Q}_2^T = \mathcal{I} + \left( \mathcal{G} - \mathcal{I} \right) \mathcal{E}_2,
    \label{eq:precmat_tup2}
\end{equation}
with:
\begin{equation}
    \mathcal{I} = \left[ \begin{array}{ccc}
    I_t & 0 & 0 \\ 0 & I_u & 0 \\ 0 & 0 & I_p \end{array} \right], \qquad \mathcal{E}_2 = \left[ \begin{array}{ccc}
    E_H & 0 & 0 \\ 0 & E_{S_1} & 0 \\ 0 & 0 & E_{S_2} \end{array} \right].
    \label{eq:I_E2}
\end{equation}
The eigenvalues $\lambda_2$ of the matrix in equation \eqref{eq:precmat_tup2} satisfy the bound \eqref{eq:eigbound2}, thus closing the proof.
\end{proof}

\begin{rem}
The outcome of Proposition \ref{thm:eig_tup} is very close to that of Proposition \ref{th:eig_tpu}, so the two approaches are expected to behave similarly. However, with $\mathcal{M}_2^{-1}$ an additional contribution to the error arises, $\varepsilon_{S_2}$, and there is no guarantee that a cluster of eigenvalues is exactly 1. Moreover, as already observed, the cost for the $\mathcal{M}_2^{-1}$ application is approximately twice that of $\mathcal{M}_1^{-1}$. To decrease this cost of $\mathcal{M}_2^{-1}$, one can apply an incomplete block factorization, i.e., only $\mathcal{D}_2^{-1} \mathcal{L}_2^{-1}$. The preconditioner $\mathcal{M}_2^{\star,-1}$ could behave similarly to the original block preconditioner $\mathcal{M}_2^{-1}$, but almost halving the application cost.
\end{rem}

\begin{rem}
As with the t-p-u approach, the Schur complements $S_1$ and $\tS_2$, equation \eqref{eq:Schur1_2} and \eqref{eq:app_Schur2_2}, respectively, have algebraic properties that change with the fracture state and the evolution of the stick/slip/open partitioning. $S_1$ is symmetric in stick mode and non-symmetric otherwise, but in any case is positive definite. By distinction, $\tS_2$ is symmetric anyway, but can be indefinite with open fracture elements, because $Q_2$ depends on the current pressure solution and no a-priori considerations can be done.
\end{rem}

Recalling equation \eqref{eq:Q2vsQ1}, we have that the Schur complement $S$ of the t-p-u approach is linked to $S_1$ by:
\begin{equation}
  \Mat{S} = \Mat{S}_1 + \frac{1}{\Delta t}\Mat{Q}_1 \Mat{\tT}^{-1} \Psi\left(\Mat{Q}_1^T\right) - \Mat{Q}_1
    \Mat{\tT}^{-1} \Mat{F}_u.
\end{equation}
The possible indefiniteness of $S$ is related to the contribution depending on matrix $\Mat{F}_u$. The second Schur complement $S_2$ reads:
\begin{equation}
  \Mat{S}_2 = \Mat{T} + \frac{1}{\Delta t}\Psi\left(\Mat{Q}_1^T\right) \Mat{\tS}_1^{-1} \Mat{Q}_1 - \Mat{F}_u
    \Mat{S}_1^{-1} \Mat{Q}_1.
\end{equation}
Again, the sum of the first two contributions is positive definite, while the indefiniteness arises from the term depending on $\Mat{F}_u$.
There is no general indication on the actual probability of either $S$ or $S_2$ to become indefinite during a full simulation.

\section{Numerical results}
\label{sec:numres}

Three sets of numerical experiments are used to investigate the robustness, scalability and computational performance of the proposed preconditioning framework. The first set (Test 1) consists of a small size single-fracture problem and is used to analyze and compare the robustness of the t-p-u and t-u-p approaches. The second set (Test 2) simulates the behavior of a number of uniformly discretized fractures with the aim at investigating the weak scalability. Finally, we consider a large-size realistic application (Test 3), representing a tilted well in a hydraulic fracturing stimulation process, in order to verify the computational efficiency in a meaningful context.

\begin{figure}
    \centering
    \includegraphics[width=8cm]{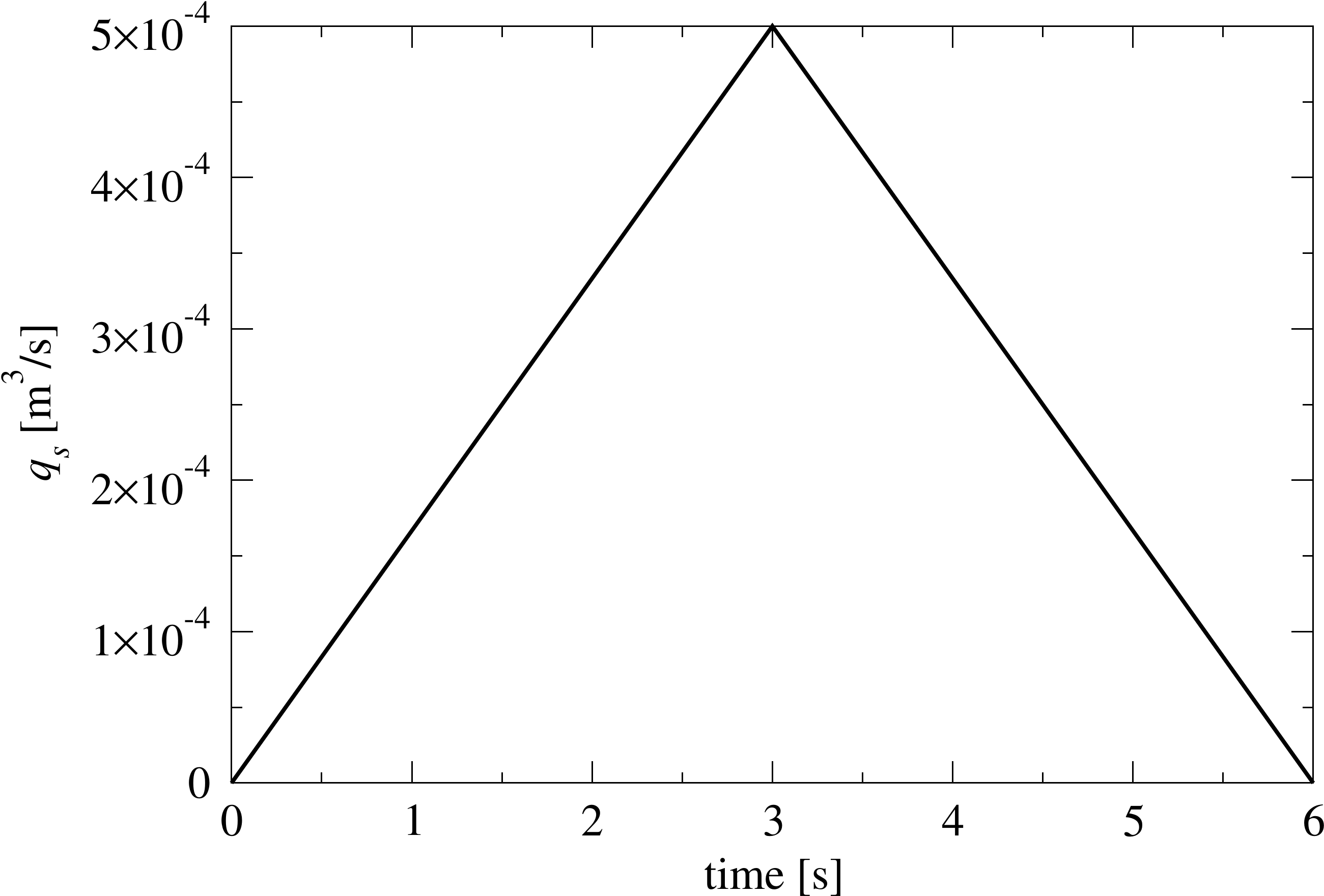}
    \caption{Flow rate in time for all the test cases.}
    \label{fig:q_history}
\end{figure}

In all test cases, a linearly increasing flow discharge $q_s$ is first injected in the fractures, with a maximum value of $5 \cdot 10^{-4}$ m$^3$s$^{-1}$, and then extracted, following the time history depicted in Figure \ref{fig:q_history}. The overall process covers \mbox{$t_{\max}=6$ s} and is discretized into 12 uniform time steps ($\Delta t=0.5$ s). The homogeneous rock material has Young's modulus and Poisson's ratio equal to $3\cdot 10^3$ MPa and $0.25$, respectively. The friction
coefficient for the fractures is $0.577$, i.e., the friction angle $\theta$ is $30^\circ$, with the
cohesion $c$ set to zero. According to \cite{kamenov2013laboratory}, the initial
conductivity value $C_{f,0}$ of equation \eqref{eq:cond_def} is equal to $10 \
\text{mD}\cdot\text{m} = 9.87 \cdot 10^{-15} \ \text{m}^2 \cdot \text{m}$. On the corners of the fracture boundary
$\partial\Gamma_i$ embedded in the 3D body $\Omega$, a constant zero pressure ($p_0 = 0$ MPa) is imposed.

For each time step, an outer loop (active-set strategy) is
coupled with an inner loop (Newton's method). Starting from the last converged stick/slip/open region partitioning, we iterate with Newton's method until the 2-norm of the non-linear residual \eqref{eq:discrete_res} is reduced by a factor $10^{-6}$. At convergence, a consistency check is carried out to verify the active/inactive status of each fracture element. If the initial stick/slip/open region partitioning has to be updated, another inner Newton's loop is performed. At each Newton's iteration, the system \eqref{eq:JacSys} is solved by a right-preconditioned full GMRES algorithm \cite{saasch86}, initialized by the zero vector and stopped when the linear residual is reduced by a factor $10^{-8}$. Hence, for each entire simulation we consider the total number of iterations needed for the outer active-set strategy, $N_{\ell}$, the inner Newton's loop, $N_N$, and the preconditioned GMRES method, $N_G$. The average GMRES iteration count for a single linear system is denoted by $\overline{N}_G$, with $N_{G,\min}$ and $N_{G,\max}$ the minimum and maximum value, respectively, required during the entire simulation.

\subsection{Test 1: Robustness}

The purpose of this test case is to verify the robustness of the proposed t-p-u and t-u-p approaches and highlight the main differences. A relatively small-size problem is set up, with a single 2-m long vertical fracture completely passing through the 3D domain, which is a box with sizes $10 \times 10 \times 0.15$ m (Figure \ref{fig:EI_Mesh}). 
The
top and bottom surfaces are fixed along the $z$-direction, while the vertical sides approximately parallel to the fracture
are compressed by a uniform normal load $\sigma_0 =
100\ \text{MPa}$. The out-of-plane displacement of the two remaining external surfaces of
the 3D body is prevented. 
The
computational grid consists of 2,944 nodes, 2,046 elements and 120 fracture elements, with the following number of
unknowns: $n_u = 8,832$, $n_t = 360$, and $n_p = 120$.

\begin{figure}
  \centering
  \null\hfill
  \includegraphics[width=0.6\linewidth]{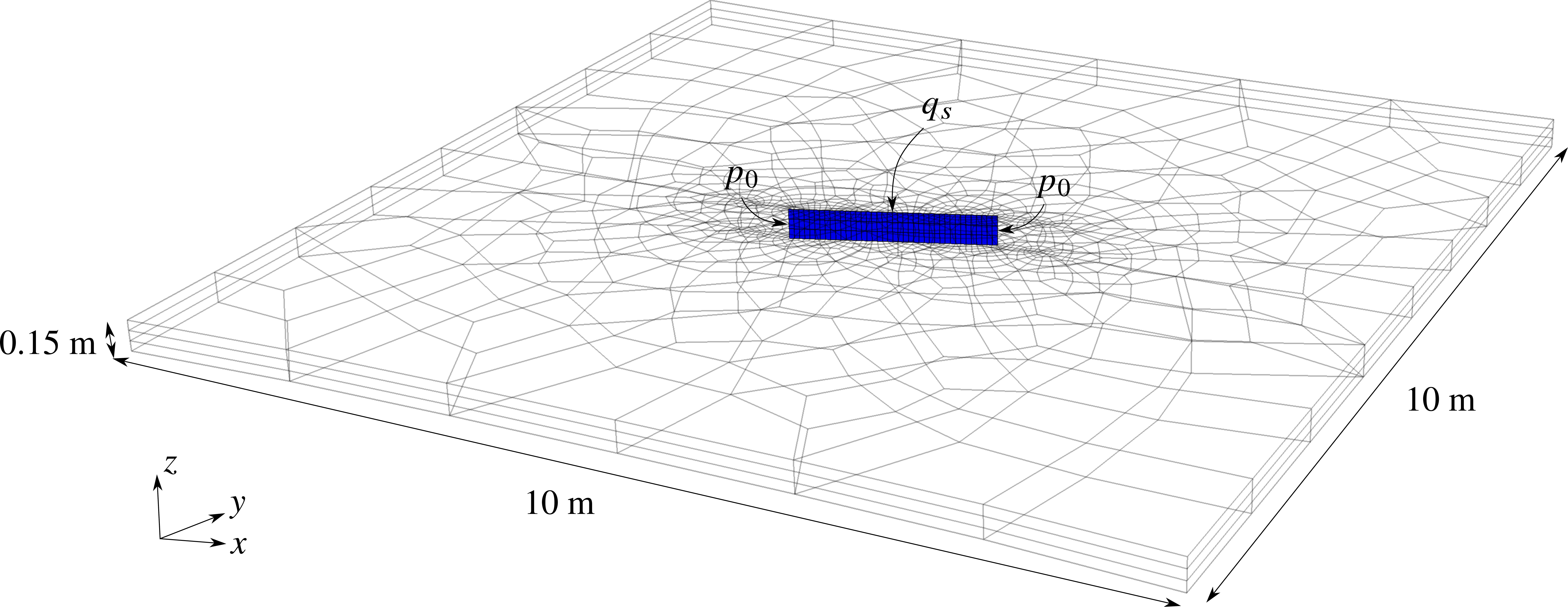}
  \hfill\null
  \caption{Test 1: Domain configuration, computational grid and injection location. The vertical exaggeration
    factor is 2.}
  \label{fig:EI_Mesh}
\end{figure}

\begin{table}
  \centering
  \begin{tabular}{c|cccccccccccc|cc}
   $t$ [s] & 0.5 & 1.0 & 1.5 & 2.0 & 2.5 & 3.0 & 3.5 & 4.0 & 4.5 & 5.0 & 5.5 & 6.0 & $N_{\ell}$ & $N_N$ \\
  \hline
   t-p-u   & 2.0 & 5.0 & 4.0 & 4.6 & 7.8 & 9.3 & 5.9 & 5.4 & 4.9 & 4.6 & 4.5 & 2.0 & 58 & 253 \\
   t-u-p   & 2.0 & 5.0 & 4.0 & 4.0 & 6.5 & 7.4 & 7.2 & 5.0 & 2.0 & 2.0 & 2.0 & 2.0 & 33 & 153 \\
   \hline
   t-p-u   & 80.0 & 72.0 & 72.0 & 62.2 & 57.3 & 43.0 &   -- &   -- &   -- &   -- &   -- &   -- & 22$^*$ & 79$^*$ \\
   t-u-p   & 80.0 & 72.0 & 72.0 & 61.8 & 56.1 & 30.9 & 16.0 & 27.7 & 46.2 & 67.6 & 81.0 & 81.0 & 43 & 199 \\
  \end{tabular}
  \caption{Test 1: Average number of linear iterations $\overline{N}_G$ for the t-p-u and t-u-p approach with nested direct solvers (upper rows) and AMG algorithms (lower rows). The total number of active-set and Newton iterations, $N_{\ell}$ and $N_N$, are also given. * means that $N_{\ell}$ and $N_N$ refer to the first six steps only.}
  \label{tab:cfr_iters}
\end{table}

We start the analysis by using nested direct methods to apply $\tS^{-1}$ in the t-p-u approach and $\tS_1^{-1}$ in the t-u-p approach. The performance obtained with this configuration can be viewed as the best potential outcome with regard to the average number of linear iterations $\overline{N}_G$. The result is reported in the upper rows of Table \ref{tab:cfr_iters}.
The two approaches behave quite similarly as far as $\overline{N}_G$ 
is concerned. 
The very small average number of GMRES iterations to converge confirms that the approximations introduced in the construction of $\tH$, $\tS$ and $\tS_2$ are pretty much acceptable.
Recall, however, that the t-u-p application cost per iteration is roughly twice that of t-p-u, so that the first approach appears to guarantee an overall better computational performance.

This result can significantly change when AMG solvers are introduced instead of inner direct methods, as it is in practice mandatory for large-size real-world simulations. The values of $\overline{N}_G$ for each time step, $N_{\ell}$ and $N_N$ are provided in the lower rows of Table \ref{tab:cfr_iters}.
It can be noticed that again the reported values are
almost the same for the two approaches in the first six simulation steps. 
Notice that the significant increase of $\overline{N}_G$ with respect to the use of nested direct solvers is mainly due to the bad elemental aspect ratio, which is as small as $\sim\! 10^{-2}$,
used to create this test case. Such a distorted grid negatively affects the conditioning of $\tS$ and $S_1$, which the AMG inner preconditioner is not able to fully address.
%
%
The main difference is met at the seventh time step, where GMRES convergence cannot be achieved with the t-p-u approach.
Although the t-p-u reduction method is generally more efficient than t-u-p, it appears to be less robust in some configurations of the stick/slip/open fracture regions.
Similar
behaviors have also been observed with other numerical experiments as well. 

The reason for such an outcome stems from the algebraic properties of the (approximate) Schur complement matrices, $\tS$ in the t-p-u approach and $S_1$-$\tS_2$ in the t-u-p approach, arising at the seventh time step of the simulation. To this aim, we analyze the eigenvalue distributions of such matrices (Figure \ref{fig:EI_eig}).
%
With the current stick/slip/open region configuration, $\tS$ turns out to be non-symmetric and one eigenvalue with a negative real part arises. Notice also the large ratio between the maximum and minimum eigenvalue modulus (Table \ref{tab:EI_eig}). In this condition, the AMG method used to approximate the application of $\tS^{-1}$ loses its theoretical properties and is no longer effective. Replacing AMG with another indefinite inner preconditioner for $\tS$, such as an incomplete LU factorization with partial fill-in, can fix this issue, at the cost of losing the method scalability.
By distinction, with the t-u-p approach we have the theoretical guarantee that the first-level Schur complement $S_1$ is positive definite independently on the fracture condition. As it can be seen from Figure \ref{fig:EI_eig} and Table \ref{tab:EI_eig}, here AMG appears to work quite effectively despite the conditioning of $S_1$ is the same as $\tS$. The second-level Schur complement $\tS_2$ is symmetric by construction, but can be indefinite. In this case, $\tS_2$ turns out to be still positive definite, but in any case its inversion by a nested direct solver would ensure an effective preconditioner behavior.  

\begin{figure}
  \centering
  \includegraphics[width=1\linewidth]{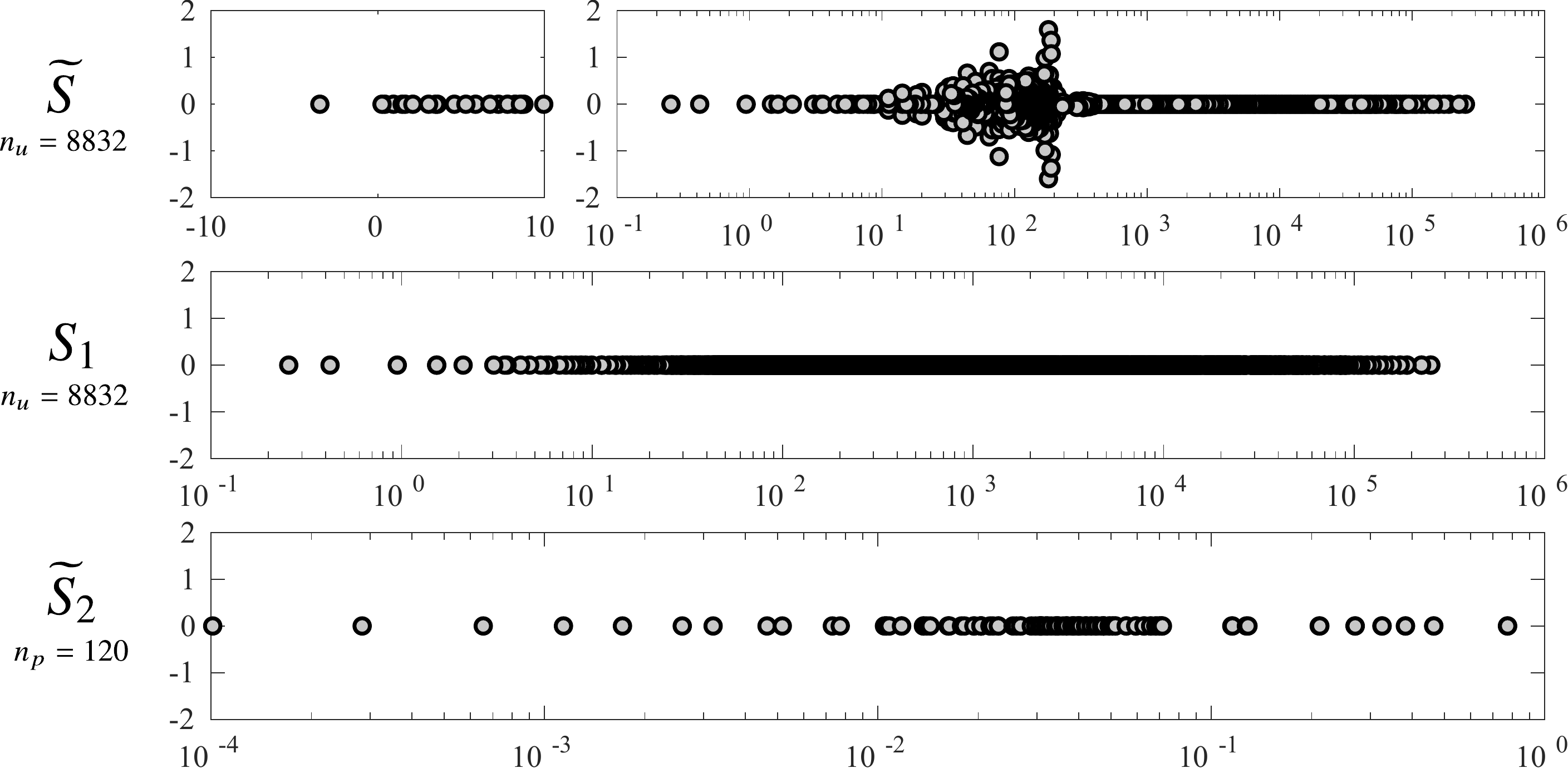}
  \caption{Test 1: Eigenvalue distribution in the complex plane for $\tS$, $S_1$ and $\tS_2$ at the seventh time step. 
  For $\tS$ a zoom around zero in arithmetic scale is also reported.}
  \label{fig:EI_eig}
\end{figure}

\begin{table}
  \centering
  \begin{tabular}{c|cc|c}
   & $|\lambda_{\max}|$ & $|\lambda_{\min}|$ & $|\lambda_{\max}|/|\lambda_{\min}|$ \\
  \hline
   $\tS$   & $2.52 \cdot 10^5$    & $2.56 \cdot 10^{-1}$ & $9.88 \cdot 10^5$ \\
   $S_1$   & $2.52 \cdot 10^5$    & $2.56 \cdot 10^{-1}$ & $9.88 \cdot 10^5$ \\
   $\tS_2$ & $7.74 \cdot 10^{-1}$ & $1.01 \cdot 10^{-4}$ & $7.66 \cdot 10^3$ \\
  \end{tabular}
  \caption{Test 1: Maximum and minimum eigenvalue modulus for $\tS$, $S_1$ and $\tS_2$ at the seventh time step.}
  \label{tab:EI_eig}
\end{table}


On summary, we can conclude that the t-p-u approach can be computationally more efficient than t-u-p. However, it may lack of robustness in large problems whenever the stick/slip/open region partitioning in a full simulation gives rise to an indefinite matrix $\tS$. The t-u-p approach, though more expensive, is also much more robust, because it concentrates the source of the possible numerical issues into $\tS_2$, which is a symmetric matrix with a 2D graph connection that can be effectively addressed by a nested direct solver. For this reason, the t-u-p approach is to be preferred in a full simulation, where unpredictable fracture configurations may arise. Therefore, in the next numerical experiments we will focus on t-u-p approach alone.

\subsection{Test 2: Weak scalability}

To investigate the weak scalability of the proposed algorithm, we first consider the test case shown in Figure
\ref{fig:WS_MeshPressure} (Test 2a), consisting of a unitary cube with four vertical fractures. Fluid injection and extraction is prescribed at the center of each fracture, simulating the action of a horizontal well.
The external faces parallel to the fractures are subjected to a
compressive constant load ($\sigma_0 = 10$ MPa), while the displacement on the other boundary faces is prevented. 
The pressure solution on the fracture surfaces at time $t=3$ s, i.e., at the maximum fluid injection rate, is also shown in Figure \ref{fig:WS_MeshPressure}.

\begin{figure}
  \centering
  \null\hfill
  \includegraphics[height=0.3\linewidth]{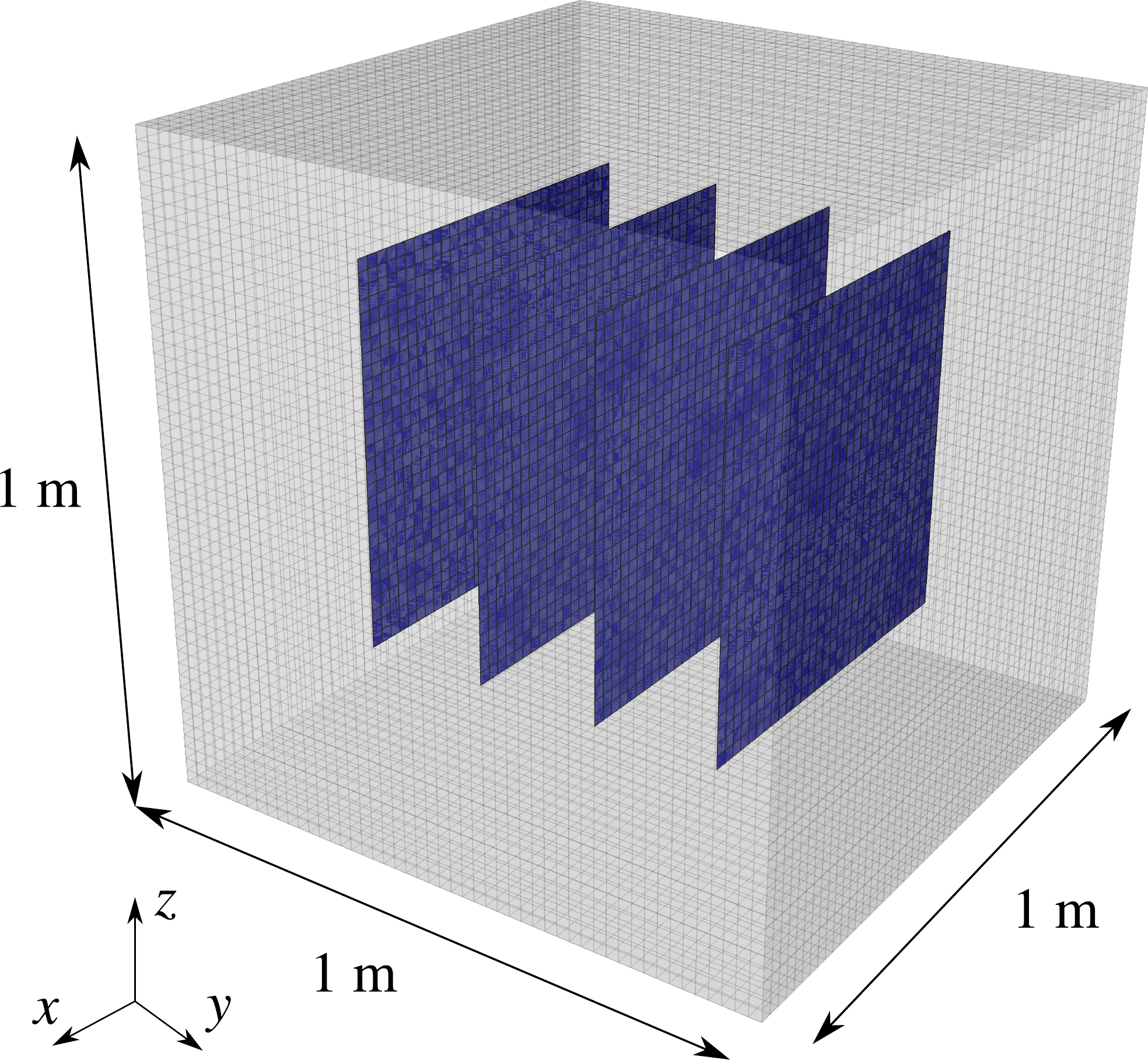}\hfill
  \includegraphics[height=0.3\linewidth]{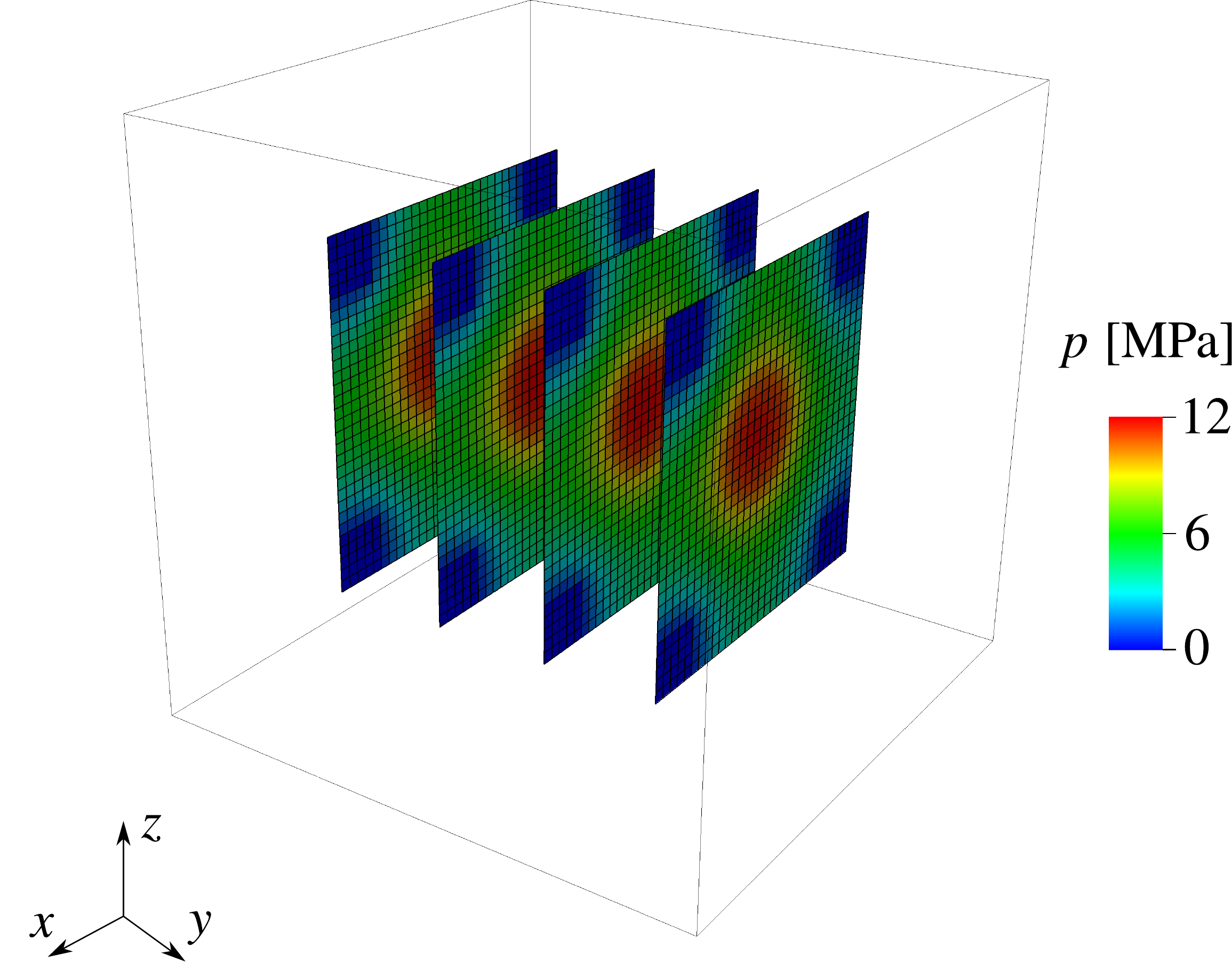}
  \hfill\null
  \caption{Test 2a: Domain configuration (left) and pressure solution at time $t=3$ s (right).}
  \label{fig:WS_MeshPressure}
\end{figure}


\begin{table}
  \centering
  \begin{tabular}{c|cc}
          & coarse &   fine \\
    \hline
    $n_u$ &  55,080 & 408,045 \\
    $n_t$ &   2,700 &  10,800 \\
    $n_p$ &    900 &   3,600 \\
    \hline
    total &  58,680 & 422,445 \\
  \end{tabular}
  \caption{Test 2a: Problem size for the coarse and fine mesh.}
  \label{tab:test2a}
\end{table}

Two uniform grid refinements are considered, with the linear elemental size varying from $h = 0.04 \ \text{m}$ to $h = 0.02 \ \text{m}$. The number of unknowns for the coarse and fine discretization is listed in Table \ref{tab:test2a}.
The overall performance of the non-linear solution algorithm is reported in Table \ref{tab:t2_nnlin}, where the number of: (i) steps of the active-set algorithm, and (ii) iterations of the inner Newton's loop, are reported for every simulation time
for both the coarse and fine discretization.
The top panel of Figure \ref{fig:t2_convNonLin} graphically summarizes the same pieces of
information.

\begin{table}
  \centering
  \begin{tabular}{lc|c|c|c|c|c|c|c|c|c|c|c|c|}
    & & \multicolumn{12}{|c|}{time [s]} \\
    \hline
    & $\ell$ & 0.5 & 1.0 & 1.5 & 2.0 & 2.5 & 3.0 & 3.5 & 4.0 & 4.5 & 5.0 & 5.5 & 6.0 \\
    \hline
    & 1 & 2 &
      2 & 2 & 2 & 5 & 6 & 6 & 6 & 4 & 2 & 2 & 2 \\
    coarse & 2 &   &   &   & 5 & 4 & 5 & 5 & 5 & 2 &   &   &   \\
    & 3 &   &   &   &   & 3 &   & 5 &   &   &   &   &   \\
%
    \hline
    & 1 & 2 &
      2 & 2 & 6 & 6 & 5 & 7*& 6*& 6 & 3 & 2 & 2 \\
    & 2 &   &   & 5 & 5 & 5 & 6 & 7 & 6 & 5 & 2 &   &   \\
    fine & 3 &   &   & 4 & 4 & 4 & 5 & 5 & 5 & 5 &   &   &   \\
    & 4 &   &   & 4 & 3 &   & 4 & 5 & 4 & 4 &   &   &   \\
    & 5 &   &   &   &   &   &   & 4 & 4 &   &   &   &   \\
  \end{tabular}
  \caption{Test 2a: Newton's iterations $N_N$ for each simulation time and active-set step $\ell$ for the coarse (top rows) and the fine mesh (bottom rows). * means that Newton's scheme
    does not converge and starting from a completely closed configuration is required.}
  \label{tab:t2_nnlin}
\end{table}


\begin{figure}
  \centering
  \includegraphics[width=\linewidth]{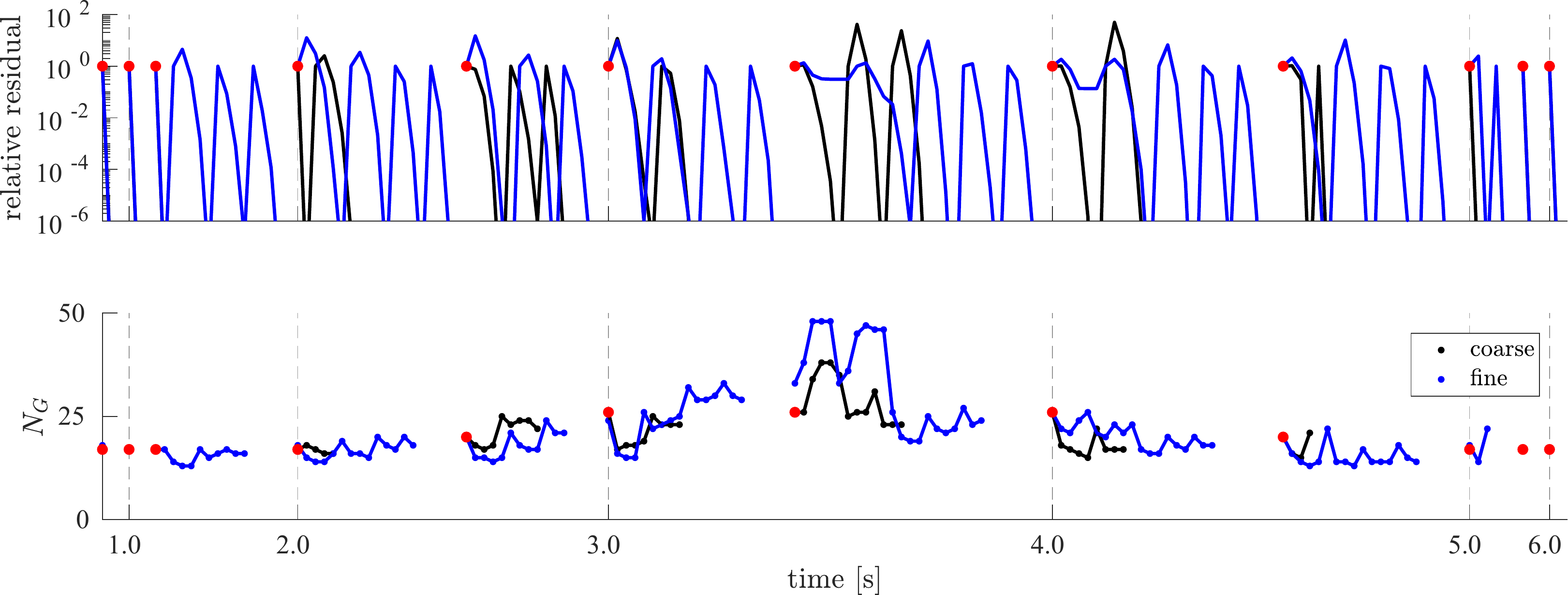}
  \caption{Test 2a: Non-linear convergence profiles (a) and number of linear iterations for each Jacobian system solution.}
  \label{fig:t2_convNonLin}
\end{figure}

The simulation is built so that all operating modes are experimented along the fracture surfaces. At the beginning, all elements are in stick conditions, then the slip and open regions progressively increase until $t=3$ s, where almost all fractures are open. Starting from $t=3.5$ s, the elements start to close again until they return to the initial condition at $t=6$ s. When a large number of fracture elements belongs to either the slip or the open region, the non-linear problem is more difficult and requires more active-set steps and inner Newton's iterations to converge. Quite intuitively, a larger number of fracture elements should require more non-linear iterations, as it can be appreciated in Table \ref{tab:t2_nnlin}. The coarse mesh totals $N_{\ell}=20$ and $N_N=75$, while the fine mesh requires $N_{\ell}=35$ and $N_N=154$.
By distinction, the linear solver performance is practically unaffected by the mesh refinement. The bottom panel of Figure \ref{fig:t2_convNonLin} and Table \ref{tab:t2_lin} show the number of GMRES iterations $N_G$ at every Jacobian system solution and the average value $\overline{N}_G$ for each time step, respectively. Notice that the linear iteration count tends to increase when the non-linear problem is more difficult, i.e., around $t=3$ s.
This is due to the different space and time refining indeed, while the space discretization is halved, the time discretization remains the same between the coarse and fine grids, and the coupling among the different physical processes involved in the simulation changes.
However, the proposed solution method appears to be fully scalable with respect to the grid size.

\begin{table}
  \centering
  \begin{tabular}{c|cccccccccccc}
   $t$ [s] &  0.5 &  1.0 &  1.5 &  2.0 &  2.5 &  3.0 &  3.5 &  4.0 &  4.5 &  5.0 &  5.5 &  6.0 \\
  \hline
   coarse  & 17.0 & 17.0 & 17.0 & 16.8 & 21.2 & 21.3 & 28.8 & 18.3 & 18.0 & 17.0 & 17.0 & 17.0 \\
   fine    & 18.0 & 17.0 & 15.5 & 16.9 & 18.2 & 25.1 & 32.5 & 20.4 & 15.4 & 18.0 & 17.0 & 17.0 \\
  \end{tabular}
  \caption{Test 2a: Average number of linear iterations $\overline{N}_G$ for the coarse and fine mesh.}
  \label{tab:t2_lin}
\end{table}

The investigated problem couples embedded 2D structures, i.e., the fractures, with the variables living in 3D domain. Therefore, a progressive grid refinement changes also the relative size of the blocks appearing in the Jacobian matrix $\mathcal{J}$, thus potentially modifying the overall problem conditioning. 
To analyze the behavior of the proposed algorithm with different 2D-to-3D ratios, i.e., the value of $(n_t+n_p)$ with respect to $n_u$, a second test case is introduced (Test 2b) consisting of a unitary cube with 7 vertical fractures (Figure \ref{fig:FP_MeshPressure}). The model has been regularly refined six times, with the mesh size varying from $h = 0.1 \ \text{m}$ to $h = 0.0167 \ \text{m}$. The number of unknowns for each refinement level, along with the percentage of 3D and 2D variables with respect to the total, is listed in Table \ref{tab:sizes}.
As with Test 2a, 
fluid injection and extraction is prescribed at the center of each fracture simulating the action of a horizontal well. The external faces parallel to the fractures are subjected to
a compressive constant load ($\sigma_0 = 10$ MPa), while the displacement on the other boundary faces is prevented. 
Figure \ref{fig:FP_MeshPressure} also shows the pressure solution at $t=3$ s.

\begin{table}
  \centering
  \begin{tabular}{c|c|c|c|c|c|c}
    level &   1 &      2 &      3 &      4 &      5 &      6 \\
    \hline
    cells & $10\times10\times10$ & $20\times20\times20$ &  $30\times30\times30$ &    $40\times40\times40$ &  $50\times50\times50$ & $60\times60\times60$ \\
    \hline
    $n_u$ &   4,668 &  31,050 &  97,176 & 221,046 & 420,600 & 714,018 \\
    $n_t$ &    972 &   3,888 &   8,749 &  15,552 &  24,300 &  34,992 \\
    $n_p$ &    324 &   1,296 &   2,916 &   5,184 &   8,100 &  11,664 \\
    \hline
    total &   5,964 &  36,234 & 108,840 & 241,782 & 453,060 & 760,674 \\
    \hline
    3D & 78.3\% & 85.7\% & 89.3\% & 91.4\% & 92.8\% & 93.9\% \\
    2D & 21.7\% & 14.3\% & 10.7\% &  8.6\% &  7.2\% &  6.1\% \\
  \end{tabular}
  \caption{Test 2b: Mesh size and percentage of 3D ($n_u$) and 2D ($n_t+n_p$) variables with respect to the total for different refinement levels.}
  \label{tab:sizes}
\end{table}

\begin{figure}
  \centering
  \null\hfill
  \subfloat[]{\includegraphics[height=0.3\linewidth]{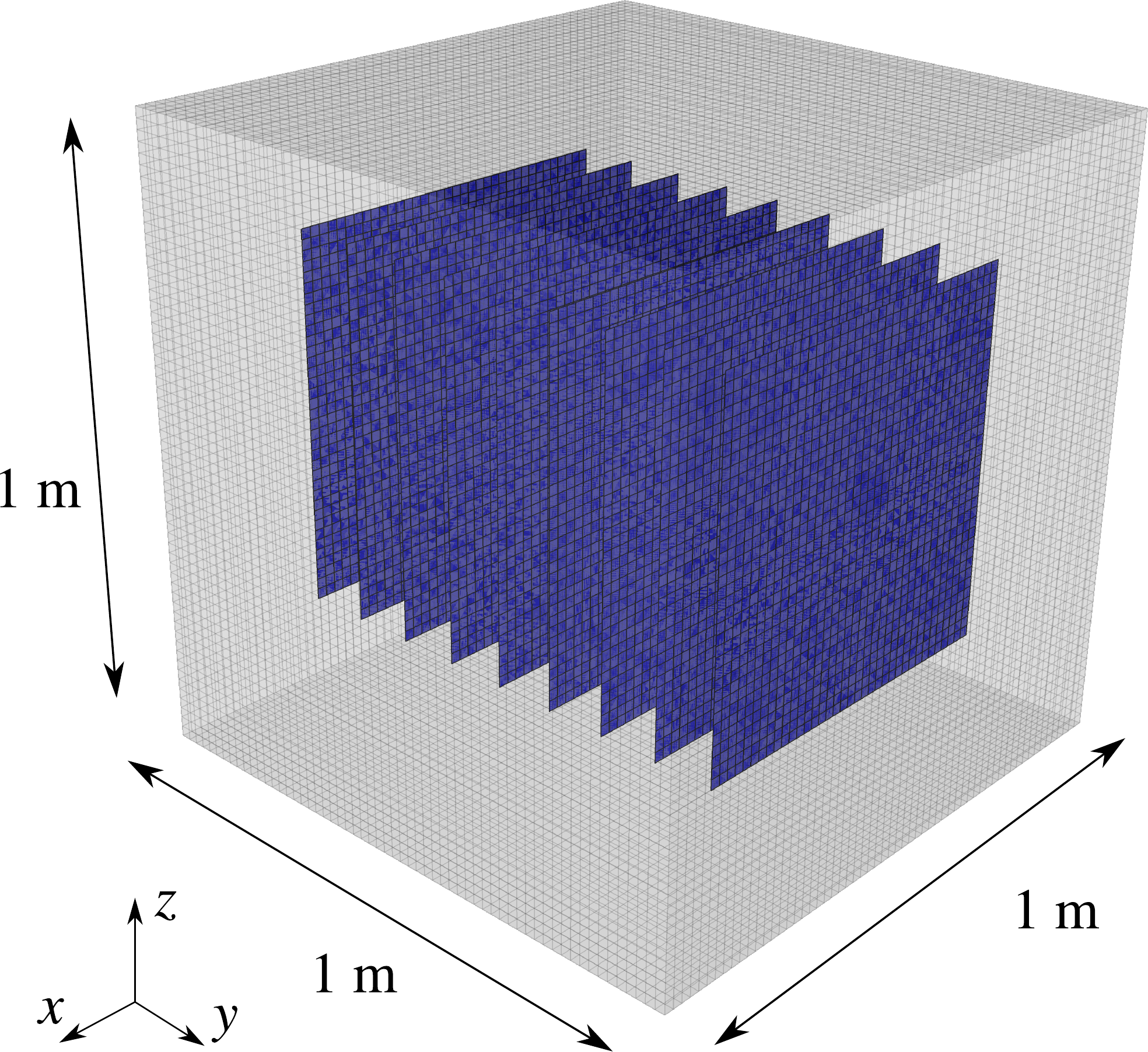}}\hfill
  \subfloat[]{\includegraphics[height=0.3\linewidth]{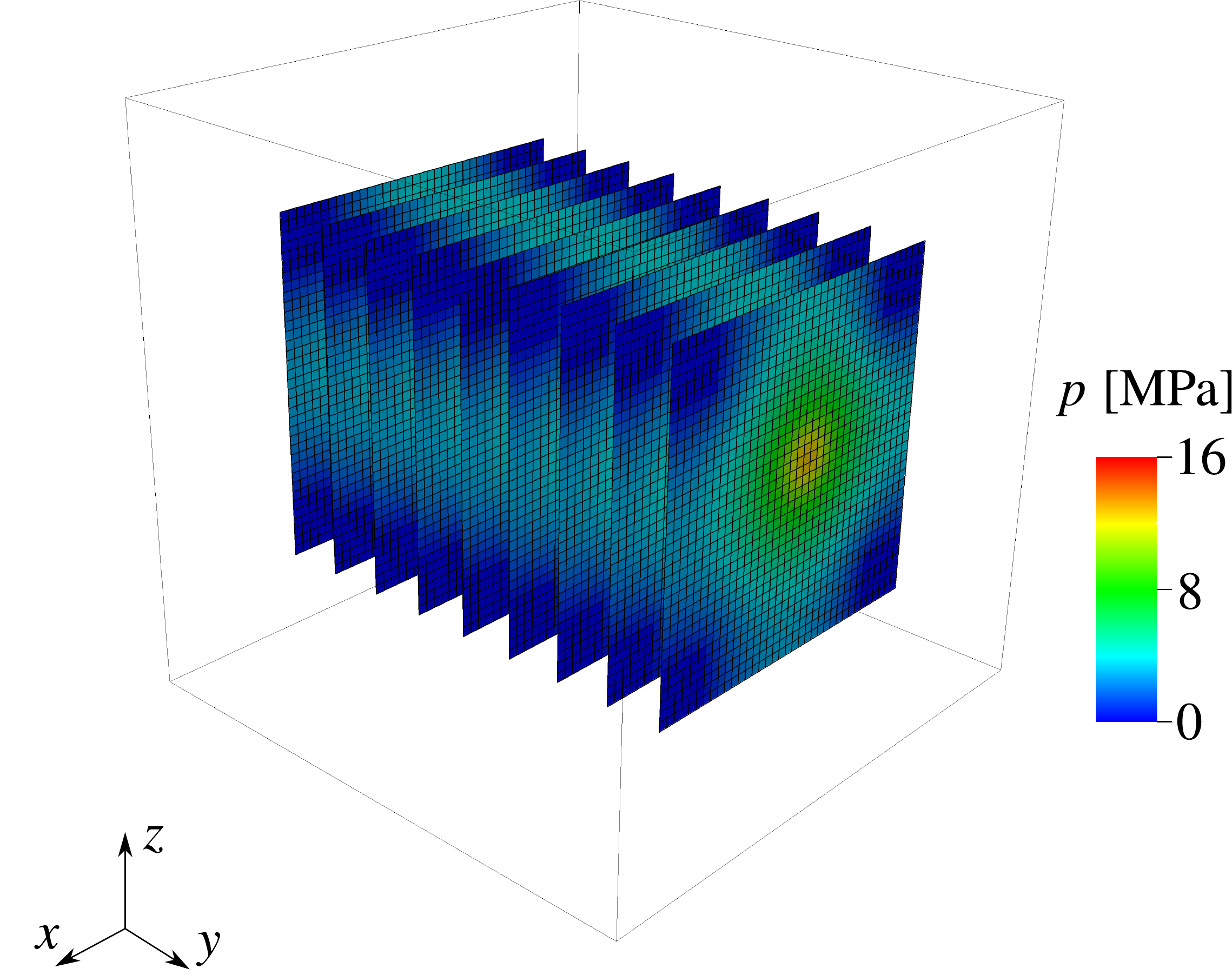}}
  \hfill\null
  \caption{Test 2b: Domain configuration (a) and pressure solution at time $t=3$ s (b).}
  \label{fig:FP_MeshPressure}
\end{figure}

Observe that after six refinements the size of the 2D blocks decreases from 21.7\% to 6.1\% only of the overall size of $\mathcal{J}$. Moreover, even with the last refinement level, which totals more than 750,000 unknowns, the size of $\tS_2$ is around 11,500, thus fully justifying the use of a nested direct solver. As already observed in Test 2a, it is expected that the overall number of non-linear iterations, i.e., $N_{\ell}$ and $N_N$, increases as the grid is progressively refined. 
This is observed in Figure \ref{fig:FP_nonLinIt}, which provides the relative variation of $N_{\ell}$ and $N_N$ with respect to the outcome obtained with the coarsest grid.
Figure \ref{fig:FP_meanLinIt} provides the average, maximum and
minimum number of GMRES iterations required by the linear solver. The average value is practically constant around 17 iterations, with the oscillations between the maximum and minimum iteration count comprised between 22 and 13. 
Hence, a very stable behavior of the proposed algorithm is obtained also changing the relative size of the matrix blocks in $\mathcal{J}$.

\begin{figure}
  \centering
  \null\hfill
  \subfloat[]{\includegraphics[width=0.4\linewidth]{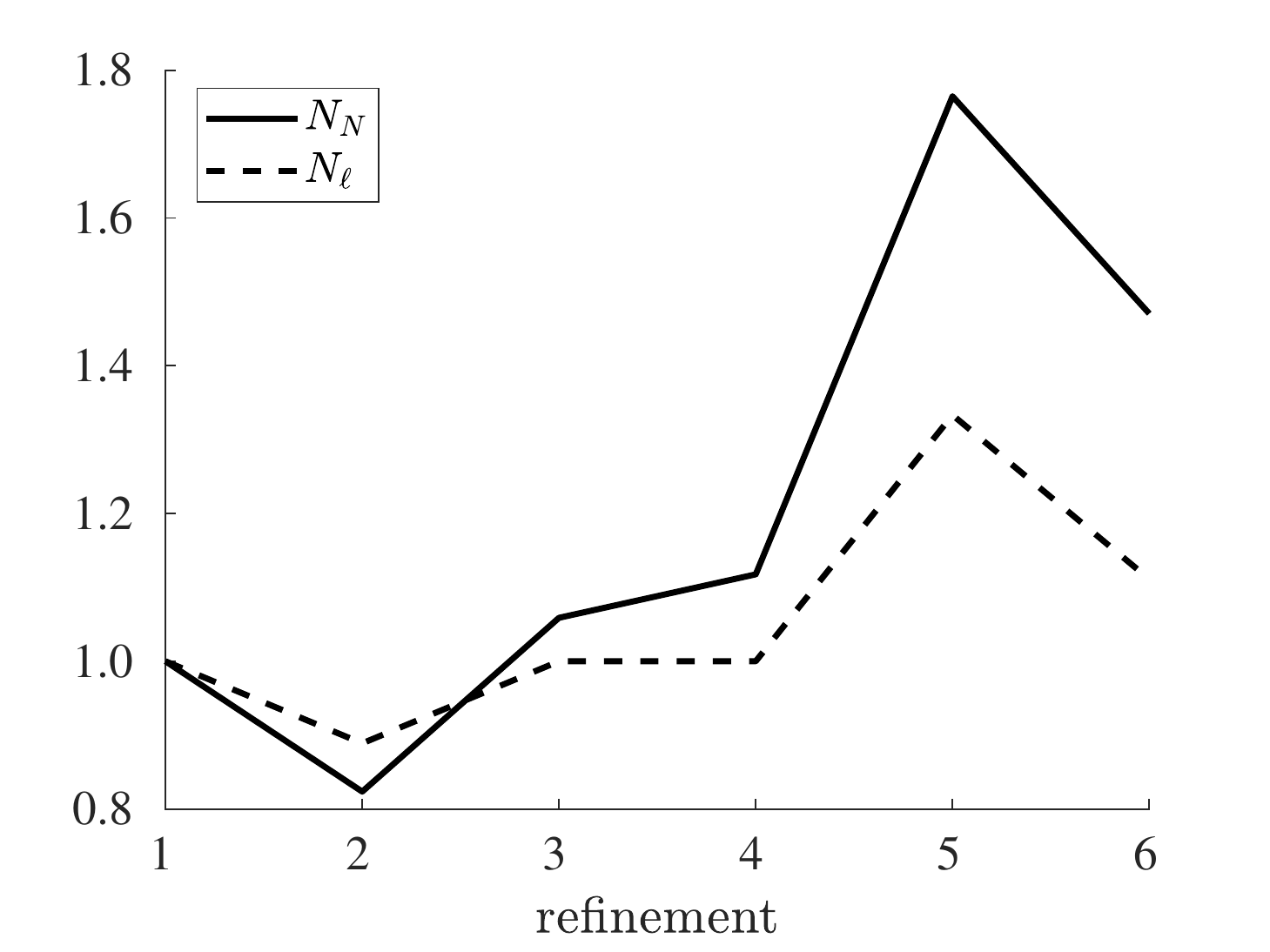}
    \label{fig:FP_nonLinIt}}\hfill
  \subfloat[]{\includegraphics[width=0.4\linewidth]{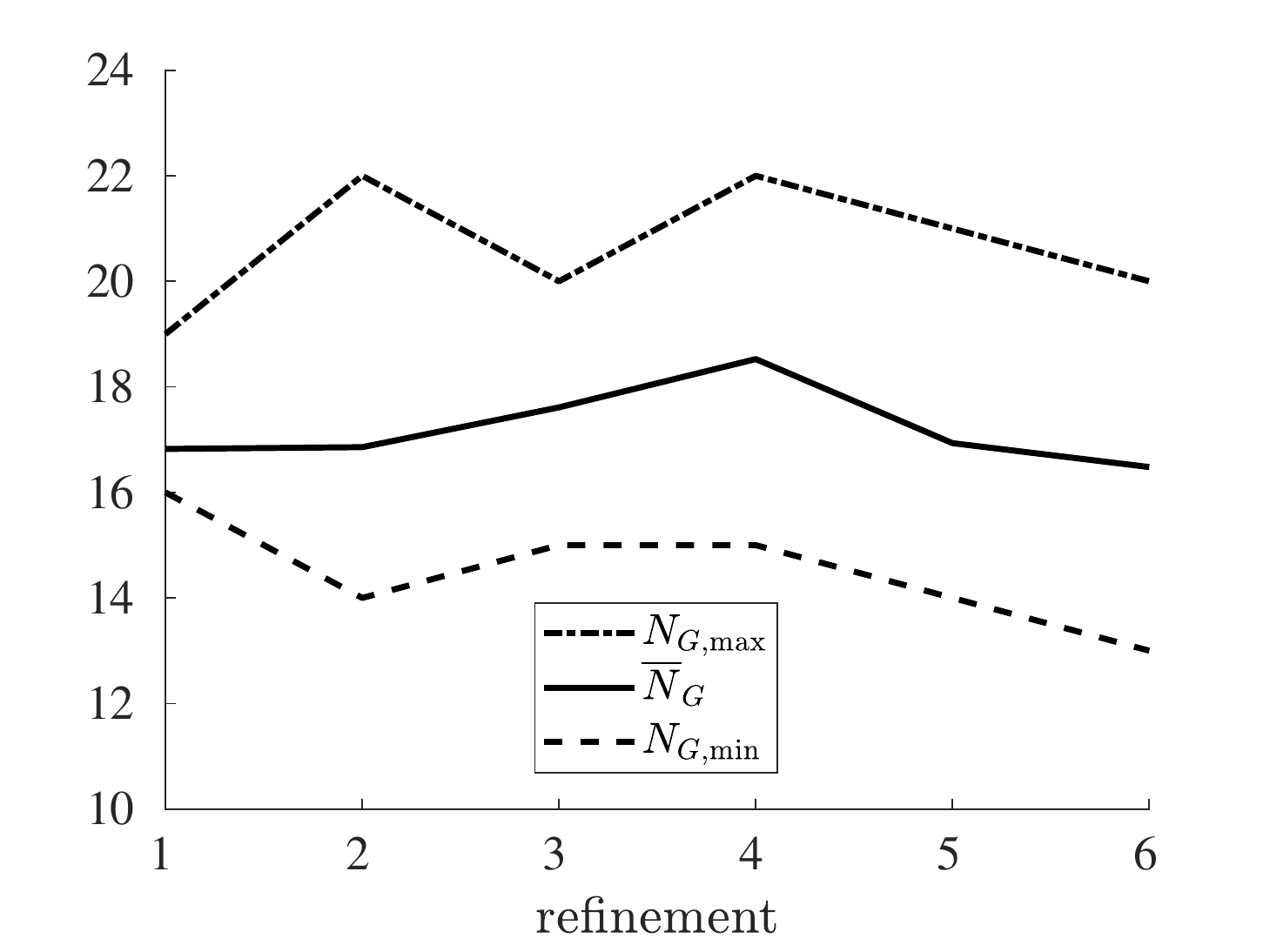}
    \label{fig:FP_meanLinIt}}
  \hfill\null
  \caption{Test 2b: active-set and Newton's iterations relative to those for the coarsest grid (a) and average, maximum and minimum GMRES iterations (b) for each refinement level.}
\end{figure}


\subsection{Test 3: Computational efficiency}

Finally, the performance of the proposed preconditioning framework is verified in a realistic application.
We consider a test case simulating a tilted well that intersects several fractures.
The problem reproduces the situation met in real-world applications of hydraulic fracturing stimulation. 
The well inclination is -15$^\circ$ with respect to the horizontal plane. The model analyzes the dynamics of 9 fractures located along the well in a $5.0\times1.6\times2.3$ m$^3$ box. The fractures
have the same size, but different relative positions with respect to the well, i.e., the well does
not intersect all of them at the same location.
The domain undergoes a compressive load parallel
to the fractures ($\sigma_0 = 10$ MPa), while displacements are prevented on the other boundary faces. The pressure at the four corners of every fractures is set to 0.
The model totals 342,642 nodes and 5,184 fracture
elements, corresponding to $n_u=1,027,926$, $n_t=15,552$, $n_p=5,184$, and an overall system size of 1,048,662 unknowns. 
The problem turns out to be particularly challenging because of the grid distortion and the different stick/slip/open region partitioning simultaneously obtained in each fracture. An example of the pressure solution obtained at $t=3.5$ s is reported in Figure \ref{fig:RW_press}.

\begin{figure}
  \centering
  \null\hfill
  \subfloat[]{\includegraphics[height=0.25\linewidth]{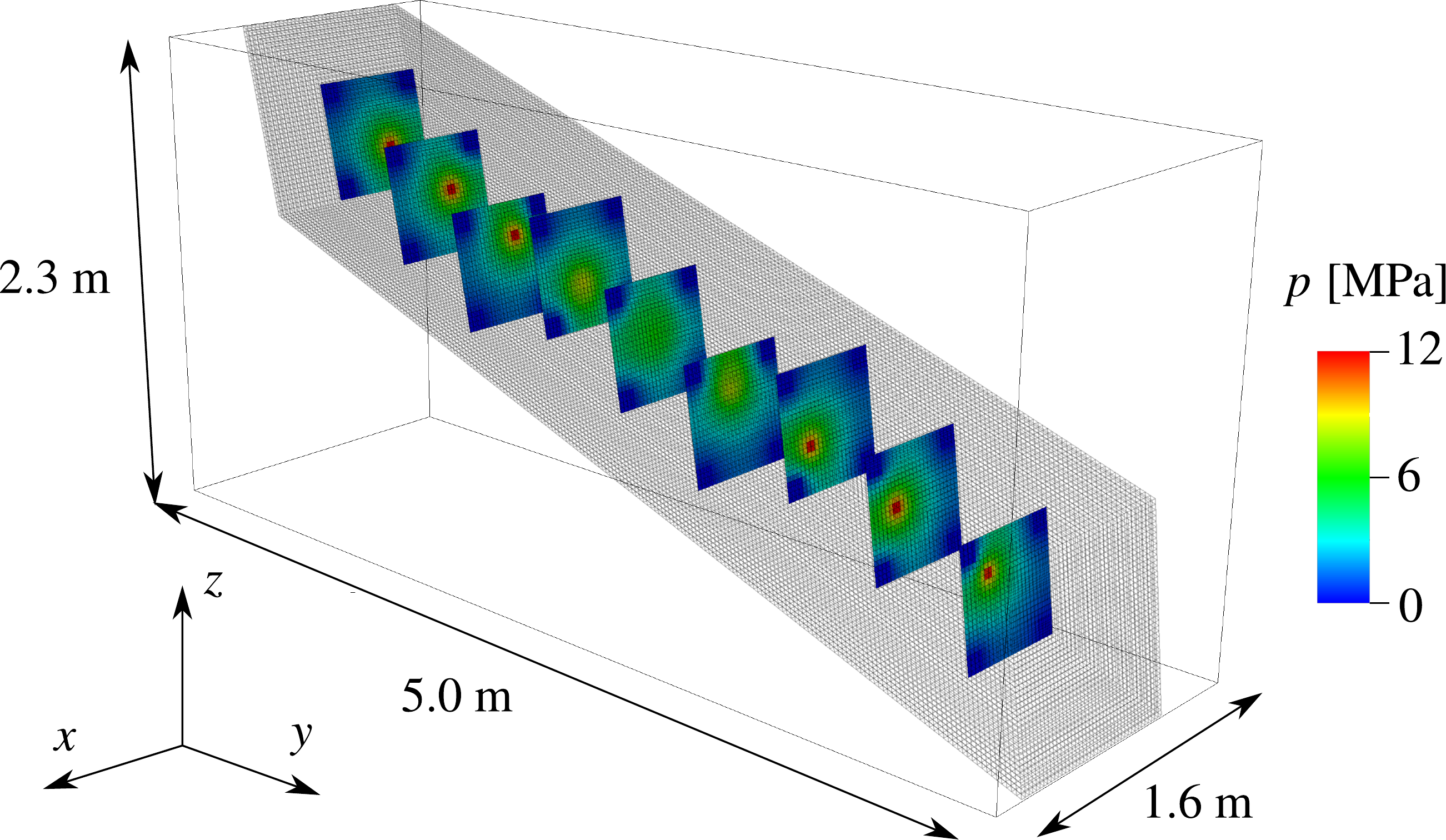}
    \label{fig:RW_press}}\hfill
  \subfloat[]{\includegraphics[height=0.25\linewidth]{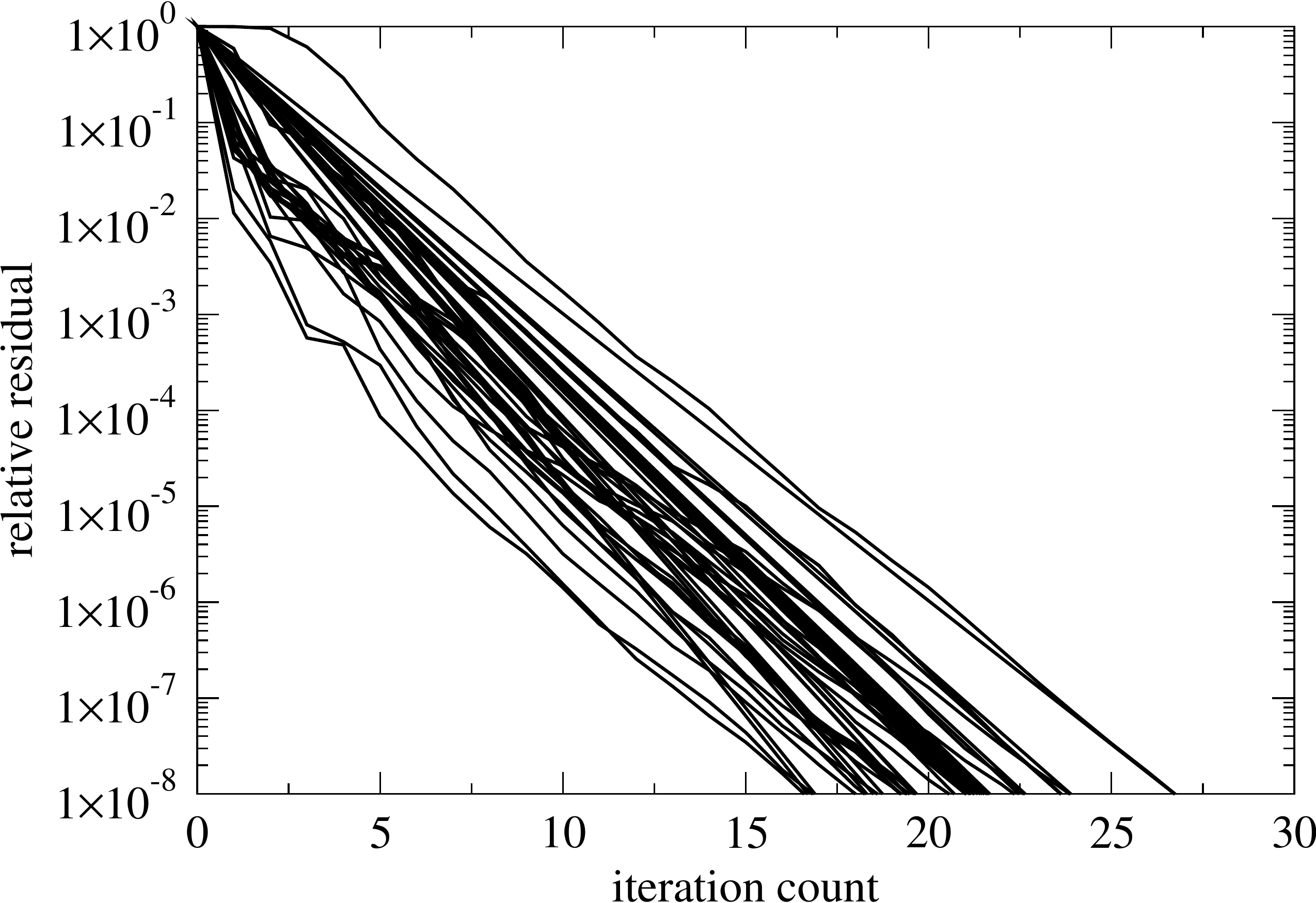}
    \label{fig:RW_conv}}
  \hfill\null
  \caption{Test 3: Pressure solution at $t=3.5$ s (a) and GMRES convergence profiles for all the Newton iterations at the same time (b).}
  \label{fig:RW_convPress}
\end{figure}

The overall non-linear simulation is very demanding, with several active-set steps and restarts for the inner Newton's loop. The performance of the non-linear solution algorithm is summarized in Table \ref{tab:RW_nolin}, which provides the active-set steps and Newton's loop iterations at every simulation time. The irregular and non-uniform behavior of the different fractures causes convergence difficulties at almost every time-step, with an overall number of active-set steps and Newton's iterations equal to $N_{\ell}=49$ and $N_N=238$. The full simulation, run on a single node of a small cluster equipped with 16 Intel(R) Xeon(R) Gold 6130 @ 2.10GHz CPU and 6 TB of RAM, requires a total CPU time of $6472.5$ s.

\begin{table}
  \centering
  \begin{tabular}{c|c|c|c|c|c|c|c|c|c|c|c|c|}
    & \multicolumn{12}{|c|}{time [s]} \\
    \hline
    $\ell$ & 0.5 & 1.0 & 1.5 & 2.0 & 2.5 & 3.0 & 3.5 & 4.0 & 4.5 & 5.0 & 5.5 & 6.0 \\
    \hline
    1 & 2 & 2 & 6 & 8 & 8 & 16 & 2 & 5 & 5 & 4 & 3 & 2 \\
    2 &   & 5 & 6 & 6 & 6 &  6 & 6 & 6 & 6 & 4 & 2 &   \\
    3 &   & 4 & 5 & 5 & 5 &  6 & 5 & 5 & 4 & 2 &   &   \\
    4 &   & 5 & 5 & 5 & 5 &  5 & 5 & 5 & 4 & 3 &   &   \\
    5 &   &   & 4 & 5 & 5 &  4 & 4 & 4 & 5 &   &   &   \\
    6 &   &   &   &   &   &    &   & 4 & 4 &   &   &   \\
    \hline
    $\overline{N}_G$ & 25.0 & 22.3 & 23.3 & 25.9 & 29.2 & 26.8 & 22.9 & 21.2 & 20.5 & 22.1 & 21.3 & 21.0 \\
  \end{tabular}
  \caption{Test 3: Newton's iterations for each simulation time and active-set step $\ell$. The average number of GMRES iterations $\overline{N}_G$ per time step is also reported. 
  }
  \label{tab:RW_nolin}
\end{table}

Despite the challenges posed by the overall simulation, the performance of the linear solver proves very stable and efficient.
The average number of GMRES iterations is reported in Table \ref{tab:RW_nolin} for each simulation time. Over the entire simulation, we have $\overline{N}_G=24.2$, with $N_{G,\min}=16$ and $N_{G,\max}=185$. The latter corresponds to a Newton step where a large number of elements move from one region to another. As an example, Figure \ref{fig:RW_conv} shows all the convergence profiles obtained by a right-preconditioned GMRES accelerated by the t-u-p approach for the Newton loop at $t = 3.5s$. It can be noticed the great stability of the solver behavior, even if the fracture state changes significantly. 
The average CPU time required in this simulation for a single system solution is $33.4$ s.


\section{Conclusions}
\label{sec:concl}

The simulation of frictional contact mechanics with fluid flow in the fracture network is an important problem in several engineering applications.
The mathematical model can be numerically solved with the aid of a blended finite element/finite volume formulation, giving rise to a strongly non-linear problem addressed by an active-set strategy coupled with an inner Newton iteration.
At each Newton step, a linear system with a $3\times3$ block Jacobian matrix has to be solved.
Since standard global approaches cannot be effectively used with the resulting non-symmetric and indefinite matrix,
this work focused on the development of a robust, scalable and efficient preconditioning framework for the solution of the inner linear problem.


The algebraic properties of the $3\times3$ block Jacobian matrix change during a full simulation with the evolution of the stick/slip/open region partitioning of the fractures. In particular, different couplings may arise and disappear, with the Schur complements possibly being symmetric positive definite, only positive definite or non-symmetric and indefinite.
The proposed preconditioning framework exploits the physics-based variable partitioning and the use of multigrid techniques for the sake of algorithmic scalability.
The basic idea relies on restricting the system to a single-physics problem, approximately solve it by an inner AMG, and prolong the solution back to the full multi-physics problem.
In particular, two multi-physics reduction sequences are developed, denoted as t-p-u (traction-pressure-displacement) and t-u-p (traction-displacement-pressure) approaches, and compared in a set of numerical examples. 
The results that follow are worth summarizing.
\begin{itemize}
    \item Theoretical analyses show that the proposed approaches are expected to have a similar convergence rate, with a slight advantage for the t-p-u approach because of a more clustered eigenvalue distribution for the preconditioned matrix and a smaller application cost. Indeed, this is confirmed by the numerical experiments if nested direct solvers are used in the preconditioner application, but this approach may soon lose robustness when AMG methods are introduced. The reason stems from the possible indefiniteness of the arising Schur complement, which is avoided in the t-u-p approach.
    \item The proposed approach proves to be algorithmically scalable with respect to the computational grid size and the relative size of the discrete fracture network to the full 3D domain. Although the non-linear problem can become harder to solve, the iteration count for the inner linear solver is independent on the discretization size.
    \item The application in a realistic configuration, simulating a hydraulic fracturing stimulation through a tilted well, shows the computational efficiency of the proposed approach. Despite the difficulty met by the non-linear algorithm, due to the combination of variable stick/slip/open operating modes for the different fractures, the linear solver exhibits a very stable behavior throughout the full simulation and a remarkable efficiency also in a sequential implementation.
\end{itemize}
Future developments regard the implementation of the proposed preconditioning framework in high performance computing
infrastructures, in order to fully exploit the algorithmic scalability and test the actual parallel efficiency, and the extension to (multi-phase) fluid flow in the porous matrix as well.

\section*{Acknowledgements}
\label{sec::acknow}
Partial funding was provided by TotalEnergies through the FC-MAELSTROM project.
Portions of this work were performed 
within the 2020 INdAM-GNCS project ``Optimization and advanced linear algebra for PDE-governed problems''.
Computational resources were provided by University of Padova Strategic Research
Infrastructure Grant 2017: ``CAPRI: Calcolo ad Alte Prestazioni per la Ricerca e
l’Innovazione''.



\end{document}